\theoremstyle{plain}
\numberwithin{equation}{section}
\newtheorem{theorem}{Theorem}[section]
\newtheorem{proposition}[theorem]{Proposition}
\newtheorem{corollary}[theorem]{Corollary}
\newtheorem{lemma}[theorem]{Lemma}
\newtheorem{definition}[theorem]{Definition}
\newtheorem{example}[theorem]{Example}
\newtheorem{remark}[theorem]{Remark}
\definecolor{darkred}{rgb}{0.8,0,0}
\definecolor{darkblue}{rgb}{0,0,0.7}
\definecolor{darkgreen}{rgb}{0,0.4,0}
\newcommand{\eps}{\varepsilon}
\newcommand{\R}{{\mathbb R}}
\newcommand{\W}{{\mathcal W}}
\newcommand{\FF}{{\mathcal F}}
\newcommand{\EE}{{\mathcal E}}
\newcommand{\V}{{\mathcal V}}
\newcommand{\un}{{\rm 1\kern -2.5pt l}}
\newcommand{\tr}{{\rm Tr}}
\def\yy{\mathbf{y}}
\def\n{\mathbf{n}}
\def\xx{\mathbf{x}}
\def\aa{\mathbf{a}}
\def\bb{\mathbf{b}}
\def\vv{\mathbf{v}}
\def\zz{\mathbf{z}}
\def\eps{\varepsilon}
\def\R{{\mathbb R}}
\def\N{{\mathcal N}}
\def\M{{\mathcal M}}
\def\H{{\mathcal H}}
\def\eps{\varepsilon}
\def\R{{\mathbb R}}
\def\N{{\mathbb N}}
\def\e{{\mathcal F}}
\def\M{{\mathcal M}}
\def\H{{\mathcal H}}
\def\F{{\mathcal F}}
\def\E{{\mathbb{E}}}
\def\argmin{\mathop{{\rm argmin}}\nolimits}
\def\Tr{\mathop{{\rm Tr}}\nolimits}
\def\dv{\mathop{{\rm div}}\nolimits}
\def\sym{\mathop{{\rm sym}}\nolimits}
\def\span{\mathop{{\rm span}}\nolimits}
\def\v{\mathbf{v}}
\def\e{\mathbf{E}}
\def\z{\mathbf{z}}
\def\v{{\bf v}}
\def\w{{\bf w}}
\def\e{{\bf e}}
\def\x{{\bf x}}
\def\Rot{\mathbf{R}}
\def\Id{\mathbf{I}}
\def\wconv{\rightharpoonup}
\renewcommand{\epsilon}{\varepsilon}
\newcommand{\beeq}{\begin{equation}}
\newcommand{\eneq}{\end{equation}}
\newcommand{\bear}{\begin{array}}
\newcommand{\enar}{\end{array}}
\newcommand{\bema}{\begin{displaymath}}
\newcommand{\enma}{\end{displaymath}}
\newcommand{\beea}{\begin{eqnarray}}
\newcommand{\enea}{\end{eqnarray}}
\newcommand{\om}{\Omega}
\def \qed{ \hfill \rule{6pt}{6pt}
\medskip}
\newcommand{\lab}[1]{ \label{#1} }
\def\Rot{\mathbf{R}}
\def\Id{\mathbf{I}}
\def\wconv{\rightharpoonup}
\title[]{The gap
in Pure Traction Problems
between Linear Elasticity and Variational Limit of
Finite Elasticity
}
   \author[]{Francesco Maddalena, Danilo Percivale, Franco Tomarelli }
 \address{Politecnico di Bari, Dipartimento di Meccanica, Matematica, Management, via Re David 200, 70125 Bari, Italy}
 \email{francesco.maddalena@poliba.it}
 \address{Universit\`{a} di Genova, Dipartimento di Ingegneria Meccanica,
  Piazzale Kennedy, Fiera del Mare, Padiglione D, 16129 Genova, Italy}
  \email{percivale@diptem.unige.it}
\address{Politecnico di Milano, Dipartimento di Matematica,  Piazza Leonardo da Vinci 32, 20133 Milano, Italy}
\email{franco.tomarelli@polimi.it}
\date{\today}  \subjclass{}
\begin{document}
 \maketitle
\begin{abstract}
A limit elastic energy for pure traction problem is derived from re-scaled nonlinear energy
of an hyperelastic material body 
subject to an equilibrated force field.\\
We show that the strains of minimizing sequences associated to re-scaled non linear energies
weakly converge,
up to subsequences, to the strains of minimizers of a limit energy,
provided an additional compatibility condition is fulfilled by the force field.
\\
The limit energy is 
different from classical energy of linear elasticity;
nevertheless the compatibility condition entails the coincidence of related minima and minimizers.\\
A strong violation of this condition provides a limit energy which is unbounded from below, while a mild violation
may produce 
a limit energy with infinitely many extra minimizers which are not minimizers of standard linear elastic energy and whose strains are not uniformly bounded.
\\
A relevant consequence of this analysis is that
a rigorous validation of linear elasticity fails for compressive force fields that do not fulfil such compatibility condition.
\end{abstract}

\tableofcontents
\begin{flushleft}
  {\bf AMS Classification Numbers (2010):\,} 49J45, 74K30, 74K35, 74R10.\\
  {\bf Key Words:\,} Calculus of Variations, Pure Traction problems, Linear Elasticity, Nonlinear Elasticity, Finite Elasticity,
  Critical points, Gamma-convergence, Asymptotic analysis, nonlinear Neumann problems.\\
\end{flushleft}

\section{Introduction}
{\it Linear Theory of Elasticity} (\cite{Gu})
plays a relevant role among Mathematical-Physics theories
for clearness, rigorous mathematical status and persistence.
It was a great achievement of last centuries, which inspired many other theories of Continuum Mechanics
and led to the formulation of a more general theory named {\it Nonlinear Elasticity} (\cite{Lo},\cite{TN}), also known as
{\it Finite Elasticity} to underline that no smallness assumptions is required.\\
There was always agreement in scholars community that the relation between
linear and nonlinear theory amounts to the linearization of strain measure
under the assumption of small displacement gradients:
this is the precondition advocated in almost all the texts on elasticity.
Nevertheless only at the beginning of present century, when appropriated tools of mathematical analysis were suitably tuned, the problem of a rigorous deduction of any particular theory based on some
{\it approximation hypotheses} from a more general {\it exact} theory, became a scientific issue related to general problem of {\it validation} of a theory, as explained in \cite{PPG}).\\
In this conceptual framework, G. Dal Maso, M. Negri and D. Percivale in \cite{DMPN} proved that
problems ruled by linear elastic energies can be rigorously deduced from problems ruled by non-linear energies
in the case of \textsl{Dirichlet and mixed boundary conditions}, by exploiting De Giorgi $\Gamma$-convergence theory (\cite{DM},\cite{DGFra}).
This result clarified the mathematical consistency of the
linear boundary value problems, under displacements and forces prescribed on the boundary of a three-dimensional material body, via a rigorous deduction from the nonlinear elasticity theory.
We mention several papers facing issues in elasticity which are connected with the context of our paper:
\cite{ABK},\,\cite{ADMDS},\,\cite{ADMLP}, \cite{ABP},\,\cite{AP}\,\cite{BBGT},\,\cite{BD},\,\cite{BT},\,\cite{CLT},\,\cite{LM}\,\cite{MPT},
\!\cite{MPT2},\,\cite{MPTFvK},\,\cite{PT1},\,\cite{PTplate},\,\cite{PT2},\,\cite{PT4}.
\\
The present paper focus on the same general question  studied in \cite{DMPN}, but here we deal with the
\textsl{pure traction problem}, i.e. the case where the elastic body is subject to a system of equilibrated forces and no Dirichlet condition is imposed on the boundary.
\vskip0.1cm
Let an open set $\Omega\subset\R^N,\ N=2,3,$ be the reference configuration of an hyperelastic material body, then the stored energy due to a deformation  $\mathbf y$ can be  expressed as
a functional of the deformation gradient $\nabla\yy$
\begin{equation*}
\int_\Omega\mathcal \,\mathcal W(\xx, \nabla\yy)\,d\xx
\end{equation*}
where  $ \W:\om \times\! \M^{N \times N}\!\to \! [0, +\infty ]$ is a frame indifferent function, $\M^{N\times N}$ is the set of real $N\times N$ matrices and
$\mathcal W(\xx, \mathbf F)<+\infty$ if and only if $\det \mathbf F >0$.\\ Then due to frame indifference there exists a function $\mathcal V$ such that
\begin{equation*}
\W(\xx,\mathbf F)=\V(\xx,\textstyle{\frac{1}{2}}( \mathbf F^T \mathbf F - \mathbf I))\,,
\qquad
\ \forall\, \mathbf F\in \M^{N\times N},\  \hbox{ a.e. }\xx\in \om.
\end{equation*}
We set $\mathbf F=\Id +h\mathbf B$, where $h> 0$ is an adimensional small parameter and
\begin{equation*}
\mathcal V_{h}(\xx,\mathbf B):= h^{-2}\mathcal W(\xx,\Id+h\mathbf B).
\end{equation*}
We assume that the reference configuration has zero energy and is stress free, i.e. $$\W(\xx,\mathbf I)=0,
\quad D \W(\xx,\mathbf I)=\mathbf 0 \quad \hbox{for a.e. }\xx \in \om \,,$$ and that $\W$ is regular enough in the second variable.
Then Taylor's formula entails
\begin{equation*}
\mathcal V_{h}(\xx,\mathbf B)=\mathcal V_{0}(\xx, \sym \mathbf B) +o(1) \qquad \hbox{ as } h \to 0_+
\end{equation*}
where $\sym \mathbf B:=\frac{1}{2}(\mathbf B^{T}+\mathbf B)$ and
\begin{equation*}
\mathcal V_{0}(\xx, \sym \mathbf B):=\frac{1}{2} \sym \mathbf B\, D^{2}\mathcal V(\xx,\mathbf 0) \,\sym \mathbf B.
\end{equation*}
If the deformation $\yy$ is close to the identity up to a small displacement, say $\mathbf y(\xx)= \xx+h\mathbf \v(\xx)$ with
bounded $\nabla \v\,$ then,
by setting $\,\E(\v):= \textstyle\frac{1}{2}(\nabla\v^{T}+\nabla\v)$\,, one plainly obtains
\begin{equation}\lab{linpunt}
\lim_{h\to 0}\int_{\om}\mathcal V_{h}(\xx,\nabla\v)\,d\xx= \int_{\om}\mathcal V_{0}(\xx,\E(\v))\,d\xx
\end{equation}
The  relationship \eqref{linpunt} was considered as the main justification of the linearized theory of elasticity,
but such point-wise convergence does not even entail that minimizers fulfilling a given fixed
Dirichlet boundary condition actually converge to the minimizers of the corresponding limit boundary value problem:
this phenomenon is made explicit by the Example 3.5 in \cite{DMPN} which exhibits a lack of compactness when $\mathcal V$
has several minima.\\
To set the Dirichlet problem in a variational perspective, referring to a prescribed vector field
$\v_{0}\in W^{1,\infty}(\om,\R^N)$ as the boundary condition on a given closed subset $\Sigma$ of
$\partial \om$ with ${\mathcal H}^{N-1} (\Sigma)>0$ and to a given load $ \mathbf g \in L^2(\om,\R^N)$,
one has to study the asymptotic behavior of the sequence of functionals $\mathcal I_h$, which is defined as
\begin{equation*}
\displaystyle \mathcal I_h(\v)=\left\{\begin{array}{ll}
\displaystyle \int_\Omega\mathcal V_{h}(\xx, \nabla \v)\,d\xx-\int_{\Omega}\mathbf g\cdot\v
\,d\xx & \ {\rm if }
\ \v\in H^1_{\v_{0},\Sigma}
\vspace{0.2cm}
\\
+\infty
& \ \hbox{\rm  else in\ }\, H^1(\Omega),\R^N),\\
\end{array} \right.
\end{equation*}
where 
$H^1_{\v_{0},\Sigma}$ denotes the closure in $H^1(\om,\R^N)$
of the space of displacements $\v \in W^{1,\infty}(\om,\R^N)$
such that $\v=\v_{0}$ on $\Sigma$: it was proved in \cite{DMPN} that (under natural growth conditions and suitable regularity hypotheses on $\mathcal W$) every sequence $\v_{h}$ fulfilling
\bema
\mathcal I_{h}(\v_{h})=
\inf \mathcal I_h  + o(1)\,
\enma
has a subsequence converging weakly
in $H^1(\om,\R^N)$
to the (unique) minimizer $\v_*$ of
the functional $\mathcal I$ representing the total energy in linear elasticity, e.g.
\begin{equation*}
\displaystyle \mathcal I(\v)=\left\{\,\begin{array}{ll}
\displaystyle \int_\Omega\mathcal \,
\mathcal V_0\left(\xx,\mathbb E(\vv)\right)
\,d\xx\,-\,\int_{\partial \Omega}\mathbf g\cdot\v
\ d\H^{n-1}(\xx) & \ {\rm if }\:\:  \v\in H^1_{\v_{0},\Sigma}
\vspace{0.1cm}
\\
+\infty
& \ \hbox{\rm  else in\ }\, H^1(\Omega;\mathbf R^{N})\, ,\\
\end{array} \right.
\end{equation*}
%
and that the re-scaled energies converge, say
\begin{equation*} \label{minenerg}
\lim_{h\to 0}
\mathcal I_{h}(\v_{h})\ = \ \mathcal I (  \vv_*)\ =\
    \int_{\om}
    \mathcal V_0\left(\xx,\mathbb E(\vv_*)\right)
    \,d\xx
  - \int_\om \mathbf g\cdot \v_*\,d\xx \,.
\end{equation*}
Such result  represents a complete variational justification of linearized
elasticity, at least as far as Dirichlet and mixed boundary value problems are concerned.
So it is natural to ask whether a similar result holds true also for pure traction problems whose variational formulation is described below.
\vskip0.2cm
In the present paper we focus our analysis on Neumann boundary conditions, say the pure traction problem in elasticity. 
Precisely we assume that $\mathbf f\in L^2(\partial \Omega;\R^N),\ \mathbf g\in L^{2}(\Omega;\R^N)$ are respectively the prescribed boundary and body force fields, such that the whole system of forces is equilibrated, namely
\begin{equation}
\label{globalequiintro}
\mathcal L(\v)\ :=\
\int_{\partial\Omega} \mathbf f\cdot\mathbf z\,d\H^{N-1}+\int_{\Omega} \mathbf g\cdot\mathbf z\,d\xx\,=\,0
\qquad \forall \mathbf z \,:\  \mathbb E(\mathbf z)\equiv \mathbf 0\,
\end{equation}
and we consider the sequence of energy functionals
\begin{equation}
\displaystyle \F_h(\v)\ =\
\displaystyle \int_\Omega\mathcal V_{h}(\xx, \nabla \v)d\xx-\mathcal L(\v)\,.
 \end{equation}
We inquire whether  the asymptotic relationship $\F_{h}(\v_{h})=\inf \F_{h}+o(1)$ as $h\to 0_+$ implies, up to subsequences, some kind of
weak convergence of  $\v_{h}$ to a minimizer $\v_{0}$ of a suitable limit functional in $H^{1}(\om;\mathbf R^{N})$.
\\
We emphasize that in the case of Neumann condition on the whole boundary things are not so plain:
indeed even by choosing $\Omega$ Lipschitz and assuming the simplest dependance of the stored energy density $\mathcal W$
on the deformation gradient $\mathbf F$, say (see \cite{C})
\begin{equation}
\lab{WquadIntrod}
\W(\xx,\mathbf F)=\left\{\begin{array}{ll}
 |\mathbf F^{T}\mathbf F-\Id|^{2}\  &\hbox{if} \ \det \mathbf F>0\\
\vspace{0.1cm}
&\\
 +\infty\ &\hbox{otherwise,}\\
 \end{array}\right.
\end{equation}
if $\mathbf g\equiv 0$,
$\ \mathbf f=f\n, \ f<0$ and $\n$ denots the outward normal to $\partial\om$
(so that the global condition \eqref{globalequiintro} holds true) then by the same techniques of \cite{DMPN} one can exhibit the $\Gamma$-limit of $\F_h$ with respect to
weak $H^1$ topology:
\begin{equation}
\Gamma(w \, H^1)\lim_{h\rightarrow 0}\ \F_h(\v)\ =\mathcal E(\v)\,,
\end{equation}
where
\begin{equation}\label{Eclassic}
\mathcal E(\v)=4\int_\Omega\vert \E(\v)\vert^2\,d\xx-f\int_{\partial\Omega}
\v\cdot\n\ d\H^{N-1}(\xx)\,\,,
\end{equation}
e.g. the classical linear elasticity formulation which achieves a finite minimum over $H^1(\om,\R^N)$ since the condition of equilibrated loads is fulfilled;
nevertheless with exactly the same choices there is a sequence $\w_h$ in $H^{1}(\om, \mathbf R^{N})$ such that $\F_h(\w_h)\rightarrow -\infty$
as $h\rightarrow 0^+$ (see Remark \ref{unbound}). Although minimizers of $\mathcal E$ over $H^1(\Omega;\R^N)$ exist, functionals $\F_h$ are not uniformly bounded from below.
These facts seem to suggest that, in presence of compressive forces acting on the boundary, the pure traction problem of linear elasticity cannot be deduced via $\Gamma$-convergence
from the nonlinear energy.\\
Moreover it is worth noting that if $\mathcal W$ fulfils \eqref{WquadIntrod} and $\mathbf g\equiv \mathbf f\equiv \mathbf 0\,$, hence $\inf \mathcal F_{h}= 0$ for every $h>0$,
then by choosing
a fixed nontrivial $N\times N$ skew-symmetric matrix $\mathbf W$, a real number $0< 2\alpha < 1$
and setting
\begin{equation}
\z_{h}:=h^{-\alpha}\,\mathbf W
\,\x \,,
\end{equation}
we get  $\ \mathcal F_{h}(\z_{h})= \inf \mathcal F_{h}+o(1)$, nevertheless
$\z_{h}$  has no subsequence
weakly converging in  $H^{1}(\om;\mathbf R^{N})$, see Remark \ref{rmk2.2}.
\\
Therefore here, in contrast to
\cite{DMPN},
we cannot expect weak $H^{1}(\om;\mathbf R^{N})$ compactness of minimizing sequences,
not even in the simplest case of null external forces: 
although this fact is common to pure traction problems in linear elasticity,
we emphasize that in nonlinear elasticity this difficulty cannot be easily circumvented in general
by standard translations since $\F_{h}(\v_{h})\!\not=\! \F_{h}(\v_{h}-\mathbb P\v_{h})$,
with $\mathbb P$ projection on infinitesimal rigid displacements.
\\
We deal this issue in the paper \cite{MPTJOTA}, showing nonetheless that at least for some special $\mathcal W$, if $ \mathcal F_{h}(\v_{h})= \inf \mathcal F_{h}+o(1)$ then up to subsequences $\mathcal F_{h}(\v_{h}-\mathbb P\v_{h})= \inf \mathcal F_{h}+o(1)$.
\vskip0.2cm
In order to have in general some kind of precompactness for sequences $\v_{h}$ fulfilling $ \mathcal F_{h}(\v_{h})=\inf \mathcal F_{h}+o(1)$, 
our approach consist in working
with a very weak notion, say weak $L^{2}(\om;\mathbf R^{N})$ convergence of linear strains: therefore the variational limit of $\mathcal F_{h}$ with respect to this convergence has to be investigated. Since  w-$L^{2}$ convergence of linear strains does not imply an analogous convergence of the skew symmetric part of the gradient of displacements, it can be expected that the $\Gamma\,$limit functional is different from the  point-wise limit of $\mathcal F_{h}$. \\
Indeed under some natural assumptions on $\mathcal W$, a careful application of the Rigidity Lemma of  \cite{FJM} shows that if $\mathbb E(\v_{h})$ are bounded in $L^{2}$ then, up to subsequences, $\sqrt h\nabla \v_{h}$ converges strongly in $L^{2}$ to a constant skew symmetric matrix and the variational limit of the sequence $\mathcal F_{h}$, with respect to the w-$L^{2}$ convergence of linear strains, turns out to be
a new functional
\begin{equation}
\label{DTfuncintro}
\F(\v)\ :=\ \displaystyle\min_{\mathbf W}\int_\Omega
\mathcal V_0\left( \,\xx,\,\mathbb E(\v) - \textstyle\frac{1}{2}\mathbf W^{2} \,\right)
\, d\xx\ -\ \mathcal L(\v) \,,
\end{equation}
where the minimum is evaluated over skew symmetric $N\!\times\! N$ matrices $\mathbf W$ and
\begin{equation}\label{QQ}
\mathcal V_0(\xx,\mathbf  B)\ :=\
\frac{1}{2} \,\mathbf  B^{T} D^2\V (\xx, \mathbf 0) \ \mathbf  B
\qquad \ \forall\,\mathbf B\in \M^{N\times N}_{sym}\,.
\end{equation}
We emphasize that the functional $\mathcal F$ in \eqref{DTfuncintro} is different
from the functional ${\mathcal  E}$ of linearized elasticity defined as
\begin{equation*}
{\mathcal  E}(\v):=\int_\Omega \mathcal V_0(\xx,\mathbb E(\v))\,d\x-\mathcal L(\v)\
\end{equation*}
since if $\v(\xx)= \textstyle\frac{1}{2} \mathbf W^{2}\xx$ with
 $\mathbf W\neq \mathbf{0}$ skew symmetric matrix,
 then $\F(\v)=-\mathcal L(\v)< {\mathcal  E}(\v)$.\\
 Nevertheless if $N=2$ then (see 
 Remark \ref{Fform}).
 $$
 \mathcal F(\vv) \ = \ \mathcal E(\vv)
-\frac{1}{4}\left (\int_{\om}\mathcal V_0(\xx,\mathbf I)
d\xx\right )^{\!-1}\left [\left (\int_{\om}D\mathcal V_0(\xx,\mathbf I)\!\cdot\!\mathbb E(\v)
\,d\xx\right)^{\!-}\right ]^{2} ,
 $$
hence $\F(\v)=\mathcal  E(\v)$ if
\begin{equation*}
N=2\,, \qquad \hbox{and}\qquad \int_{\om}D\mathcal V_{0}(x,\mathbf I)\cdot \mathbb E(\v)\,dx\ge 0\,.
\end{equation*}
In particular if $N=2$ and $\mathcal W$ is the {\it Green-St.Venant} energy density 
then the previous inequality reduces to
\begin{equation*}
\int_{\om} \dv\, \v\,d\xx \ge 0
\end{equation*}
which means, roughly speaking, that the area of $\om$ is less than the area of the related deformed configuration $\yy(\Omega)$, where $\yy(\xx)=\xx+h\vv(\xx)$ and $h>0$.
\\
The main results of this paper are stated in Theorems \ref{mainth1} and \ref{linel},
showing that under a suitable compatibility condition
on the forces (subsequent formula \eqref{compintro}) the pure traction problem in linear elasticity is deduced via $\Gamma$-convergence from
pure traction problem formulated in nonlinear elasticity,
referring to weak $L^2$ convergence of the linear strains. \\
Precisely Theorem \ref{mainth1} states that, if the loads $\mathbf f,\ \mathbf g$ fulfil \eqref{globalequiintro} together with the next compatibility condition
\beeq
\lab{compintro}
\int_{\partial\Omega}\!\!\mathbf f\cdot\mathbf W^{2}\x\, d\H^{N-1}
\!+\!
\int_{\Omega}\!\mathbf g\cdot\mathbf W^{2}\x\, d\xx\ <\ 0 \qquad\forall\,
\hbox{skew symmetric matrix } \mathbf W\!\not=\! \mathbf 0\,,
\eneq
then 
every sequence $\v_{h}$ with
$\F(\v_{h})=\inf\F_{h}+o(1)$ has a subsequence such that
the corresponding linear strains weakly convergence in $L^{2}$ to the linear strain of a minimizer of $\F$, together with convergence (without relabeling)
of energies $\F_{h}(\v_{h})$ to $\min \F$. 
Under the same assumptions Theorem \ref{linel} states that minimizers of $\F$ coincide with the ones of of linearized elasticity functional
$\mathcal E$, thus providing a full justification of  pure traction problems in linear elasticity at least if \eqref{compintro} is satisfied.
In particular, as it is shown in Remark 2.8,  this is true when $\mathbf g\equiv 0,\ \mathbf f=f\n$ with $f>0$ and $\n$ is the outer unit normal vector  to $\partial\om,$ that is when we are in presence of tension-like surface forces.
\\
Moreover, if there exists an $N\!\times\! N$ skew symmetric matrix such that the strict inequality
is reversed in \eqref{compintro}, then functional $\F$ is unbounded from below: see Remark \ref{controsegno} and Example \ref{nocrit}.
On the other hand if inequality in \eqref{compintro} is satisfied in a weak sense
by every skew symmetric matrix, then $\argmin \F$ contains $\argmin \mathcal E, \ \min \mathcal F=\min \mathcal E$  but $\F$
may have infinitely many minimizing critical points which are not minimizers of $\mathcal E$ (see Proposition \ref{exinfmin}).\\
Summarizing, only two cases are allowed: either $\min \mathcal F=\min \mathcal E$ or $\inf \F=-\infty$;
actually the second case arises in presence of compressive surface load.\\
By oversimplifying we could say
that $\F$ somehow preserves memory of instabilities which are typical of finite elasticity, while they disappear in the linearized model described by $\mathcal E$.\\
In the light of Theorem \ref{mainth1} and of remarks and examples of Section \ref{sectionlimpb}, it seems reasonable that,
as far as it concerns pure traction problems, the range of validity of linear elasticity should be restricted to a certain class
of external loads, explicitly those verifying \eqref{compintro}, a remarkable example in such class is a uniform normal tension load at the boundary as in Remark \eqref{tensatbdry};
while
in the other cases equilibria of a linearly elastic body could be better described through critical points of $\F$, whose existence in general seems to be an interesting and open problem.
\\
Concerning the structure of the new functional, we emphasize that actually $\mathcal F$ is different
from the classical linear elasticity energy functional $\mathcal E$, though there are many relations
between their minimizers (see Theorem \ref{linel}
). Further and more detailed information about functional $\mathcal F$ (a suitable property of weak lower semicontinuity, lack of subadditivity, convexity in 2D, nonconvexity in 3D) are described and proved in the paper \cite{MPTJOTA}.
\\\vskip0.2cm
\section{Notation and main result}
Assume that the reference configuration of an elastic body is a
\begin{equation}\label{OMEGA}
\hbox{ bounded, connected open set } \Omega \subset \R^N \hbox{ with Lipschitz boundary, }  \ N= 2, 3.\quad
\end{equation}
The generic point $\xx\in \Omega$ has components $x_j$ referring to the standard basis vectors
${\mathbf e}_j$ in $\R^N$;
${\mathcal L}^N$ and ${\mathcal B}^N$ denote respectively the
$\sigma\mbox{-algebras}$
of Lebesgue measurable and Borel measurable subsets of $\R^N$. For every $\alpha\in \mathbb R$ we set $\alpha^+=\alpha\vee 0,\ \alpha^-=-\alpha\vee 0$.
\vskip0.1cm
The notation for vectors $\aa,\,\bb\in\R^N$ and $N \!\times \!N$ real matrices $\mathbf A,\,\mathbf B,\, \mathbf F $ are as
follows: $\aa\cdot\bb=\sum_j\aa_j\bb_j\,;$
$\mathbf A\cdot\mathbf B=\sum_{i,j}\mathbf A_{i,j}\mathbf B_{i,j}\,;$
$[\mathbf A\mathbf B]_{i,j}=\sum_{k}\mathbf A_{i,k}\mathbf B_{k,j}\,;$
$| \mathbf F|^2 =\hbox{Tr}(\mathbf F^{T}\mathbf F)=\sum_{i,j}F_{i,j}^2$ denotes the squared Euclidean norm of $\mathbf{F}$ in the space $ \M^{N \times N}$ of $N\!\times\! N$ real matrices;
$\mathbf I \in \M^{N \times N}$ denotes the identity matrix,
$SO(N)$ denotes the group of rotation matrices, $\M^{N\times N}_{sym}$ and $\M^{N\times N}_{skew}$ denote respectively the sets of symmetric and skew-symmetric matrices.
For every $\mathbf B\in \M^{N \times N}$ we define ${\rm sym\,}\mathbf B:=\frac{1}{2}(\mathbf B+\mathbf B^T)$
and  ${\rm skew\,}\mathbf B:=\frac{1}{2}(\mathbf B-\mathbf B^T)$.
\vskip0.1cm
It is well known that the matrix exponential maps $\M^{N\times N}_{skew}$ to $SO(N)$ and is surjective on $SO(N)$ (see \cite{H}).
Therefore for every $\mathbf R\in SO(N)$ there exist $\vartheta\in \mathbb R$ and $\mathbf W\in \M^{N\times N}_{skew},$ $|\mathbf W|^{2}=2$ such that $\exp(\vartheta\,\mathbf W)= \mathbf R$. By taking into account that $\mathbf W^{3}=-\mathbf W$, the Taylor's series expansion of $\,\vartheta\to \exp(\vartheta\,\mathbf W)=\sum_{k=0}^\infty \vartheta^k\mathbf W^k/k!\,$ yields the {\it Euler-Rodrigues formula}:
\begin{equation}\label{eurod}
\exp(\vartheta\,\mathbf W)\,=\,\mathbf R\,=\,\mathbf I\,+\,\sin\vartheta \,\mathbf W\,+\,(1-\cos\vartheta)\,\mathbf W^{2}\,.
\end{equation}
In particular from \eqref{eurod}  it follows that if we set
\begin{equation}\label{cone}
\mathbb K:=\{\tau(\mathbf R-\mathbf I): \tau> 0,\  \mathbf R\in SO(N)\}
\end{equation}
then we obtain
\begin{equation}\label{clcone}
\overline{\mathbb K}={\mathbb K}\cup {M^{N\times N}_{skew}}.
\end{equation}
\\
For every $\mathcal U:\om\times \M^{N \times N}\rightarrow \mathbb R,$ with $ \mathcal U(\xx,\cdot)\in C^{2}$ a.e. $\xx\in \om$,
we denote
by $D\mathcal U(\xx,\cdot)$ and $D^{2}\mathcal U(\xx,\cdot)$ respectively
the gradient and the hessian of $g$ with respect to the second variable.
\\
For every displacements field $\vv\in H^1(\Omega;\R^N)$,  $\E(\vv)\!:=\! \hbox{sym }   \!\nabla\vv$ denotes the infinitesimal strain tensor field,
$\mathcal R\!:=\!\{ \vv\in H^1(\Omega;\R^N): \E(\v)=\mathbf 0\}$ denotes the space spanned by set of the infinitesimal rigid displacements and
$\mathbb P\vv$ is the orthogonal projection of $\vv$ onto $\mathcal R$.\\
We set $-\!\!\!\!\!\int_\om \vv d\xx =|\om|^ {-1 } \int_\om\vv d\xx.$
\\
We consider a body made of an hyperelastic material, say
there exists a ${\mathcal L}^N\! \!\times\! {\mathcal B}^{N^2} $measurable
$\W : \om \times \M^{N \times N} \to [0, +\infty ]$
such that,  for a.e. $\xx \in \om$,
$\W(\xx,\nabla \mathbf y(\xx))$ represents the stored energy density, when $\mathbf y(x)$ is the deformation and $\nabla \mathbf y(\xx)$ is the deformation gradient.\\
Moreover we assume that for a.e. $\xx \in \om$
\beeq\lab{incom}  
	\W(\xx,\mathbf F)=+\infty \qquad \mbox{if $\det \mathbf F \leq 0$} \quad\hbox{(orientation preserving condition)}\,,
\eneq
\beeq \lab{framind} \W(\xx, \mathbf R\mathbf F)=\W(\xx, \mathbf F)\qquad \forall \, \mathbf R\!\in\! SO(N) \quad
\forall\, \mathbf F\in \M^{N \times N}\ \quad  \hbox{(frame indifference)}\,,
\eneq
\beeq\lab{reg} \exists\   \hbox{a neighborhood} \ \mathcal A  \  \hbox{of}\ SO(N) \hbox{ s.t.}
\quad\mathcal W(\xx,\cdot)\in C^{2}(\mathcal A)\,,
\eneq
\beeq \lab{coerc}
\exists\, C\!>\!0 \hbox{ independent of }\xx:\ 
\W(\xx,\mathbf F)\ge  C|\mathbf F^{T}\mathbf F-\mathbf I|^{2}\ \,\forall\, \mathbf F\!\in\! \M^{N \times N}\,   \hbox{(coerciveness)},
\eneq
\beeq \lab{Z1}
	\W(\xx,\mathbf I)=0\,, \qquad D \W(\xx,\mathbf I)=0\,, \qquad \hbox{for a.e. }\xx \in \om  \,,
\eneq
that is  the reference configuration
has zero energy and
is stress free, so
by \eqref{framind}  we get also
$$\W(\xx,\mathbf R)\!=\!0,\ D \W(\xx,\mathbf R)\!=\!0 \qquad \forall \,\mathbf R\in SO(N)\,.$$
By frame indifference there exists a  ${\mathcal L}^N \!\times\! {\mathcal B}^N \mbox{-measurable}$ $\V : \om \times \M^{N \times N} \to [0,+\infty]$ such that for every $\mathbf F\in \M^{N\times N}$
\beeq
\lab{WhatZ}
\W(\xx,\mathbf F)=\V(\xx,\textstyle{\frac{1}{2}}( \mathbf F^T \mathbf F - \mathbf I))
\eneq
and by \eqref{reg}
%
\beeq\lab{regV} \exists\   \hbox{a neighborhood} \ \mathcal O \  \hbox{of}\ \mathbf 0 \hbox{ such that}
\ \mathcal V(\xx,\cdot)\in C^{2}(\mathcal O), \hbox{ a.e. }x \in \om \,.
\eneq
In addition we assume that  there exists $\gamma>0$ independent of $\xx$ such that
\beeq \lab{Velliptic}
	\left|\,\mathbf B^T\,D^2 \mathcal V(\xx,\mathbf D) \,\mathbf B\,\right| \, \leq \, 2\, \gamma\, |\mathbf B|^2
	\quad
	\forall\, \mathbf D\!\in\! \mathcal O, \ \forall\, \mathbf B \!\in\! \M^{N \times N} .
\eneq


By (\ref{Z1}) and Taylor expansion with Lagrange reminder we get,
for a.e. $\xx \in \om$ and suitable $t \in (0,1)$ depending on $\xx$ and on $\mathbf B$:
\beeq \lab{12'}
	\mathcal V(\xx,\mathbf  B)\ =\ \frac{1}{2}\, \mathbf  B^{T} D^2\V (\xx, t \mathbf  B)\, \mathbf  B\,.
\eneq
Hence  by \eqref{Velliptic}
\beeq \lab{upbound}
	\mathcal V(\xx,\mathbf  B) \, \leq \, \gamma \,|\mathbf  B|^2
	\qquad \forall \ \mathbf  B\in \M^{N \times N}\cap  \mathcal O\,.
\eneq
According to \eqref{WhatZ}
for a.e.  $ \xx \!\in\! \om,\ h\!>\!0$ and every $ \mathbf  B\in \M^{N \times N} $ we set
\beeq \lab{Wh}
	\mathcal V_{h} (\xx,\mathbf B) := \frac{1}{h^2} \, \mathcal W(\xx,\Id+h \mathbf B) =
	\frac{1}{h^2} \, \mathcal V(\xx,h \, \hbox{\rm sym\,} \mathbf B +
	\textstyle{\frac{1}{2}}h^2 \mathbf B^T \mathbf B)\ .
\eneq
Taylor's formula with \eqref{Z1},\eqref{Wh} entails
$\mathcal V_h(\xx, \mathbf B)= \frac 1 2 \, (\hbox{\rm sym\,} \mathbf B)\,
D^2\mathcal V(\xx,\mathbf 0) \,
 (\hbox{\rm sym\,} \mathbf B) + o(1)$, so
\begin{equation}\lab{Vh}
 \mathcal V_h(\xx, \mathbf B)\ \to\ \mathcal V_0 (\xx, \hbox{\rm sym } \mathbf B) \hbox{ as } h\to 0_+\,,
 \end{equation}
where the point-wise limit of integrands is the quadratic form $\mathcal V_0$ defined by
 \begin{equation}\label{A e Q}
    \mathcal V_0 (\xx,\mathbf B)
    \ :=\ \frac 1 2 \,\mathbf B^TD^2\mathcal V(\xx,\mathbf 0) 
  \,\mathbf B\,
  \quad \hbox{a.e. }\,\xx\in\Omega,\ \mathbf B\in \M^{N\times N}\,.
  \end{equation}
The symmetric fourth order tensor $D^2\mathcal V(\xx,\mathbf 0)$ in \eqref{A e Q} plays the role of the classical elasticity tensor.
\\
By 
\eqref{coerc} we get
\begin{equation}\label{(2.18)}
  \mathcal V_{h} (x,\mathbf B)\ =\ \frac{1}{h^2} \, \mathcal W(x,\Id+h \mathbf B)\ \ge\
  C\,|\,2\,{\rm sym } \mathbf B\,+\,h\,\mathbf B^T \mathbf B\,|^{2}
\end{equation}
so that \eqref{A e Q} and \eqref{(2.18)} imply the ellipticity of $\mathcal V_0$\,:
\beeq \lab{Z3}
	\mathcal V_0 (\xx,\hbox{\rm sym}\,\mathbf B)
	\, \geq \, 4\,C \, |\hbox{\rm sym}\,\mathbf  B|^2
	\qquad 
\ \hbox{a.e. }\,\xx\in\Omega,\ \mathbf B\in \M^{N\times N}\,.
\eneq
%
Let $\mathbf f\in L^2(\partial \Omega;\R^N),\ \mathbf g\in L^{2}(\Omega;\R^N)$ be a pair of surface and body force fields respectively.\\
For a suitable choice of the adimensional parameter $h>0$, the functional representing the total energy is labeled by $\mathcal F_h: H^1(\Omega;\R^N)\to \R\cup\{+\infty\} $ and defined as follows
\begin{equation}
\label{nonlinear}
\displaystyle \mathcal F_h(\v)\ :=\
\int_\Omega \mathcal V_{h}(\xx, \nabla\v)\, d\xx\,-\,\mathcal L(\v)\,,
\end{equation}
where
\beeq
\lab{potential} \mathcal L(\v):\ =\ \int_{\partial \Omega}\mathbf f\cdot\v
\ d\H^{n-1}\, +\,\int_{\Omega} \mathbf g\cdot\mathbf v\ d\xx.
\eneq
%
In this paper we are interested in the  asymptotic behavior as $h\to 0_+$
of functionals $\mathcal F_h$
and to this aim we
introduce the limit energy functional $\mathcal F: H^1(\Omega;\R^N)\to \R$ defined by
\begin{equation}
\label{DTfunc}
\F(\v)\ =\ \displaystyle\min_{\mathbf W\in \M^{N\times N}_{skew}}\int_\Omega 
\mathcal V_0\left(\xx,\mathbb E(\v)-\textstyle\frac{1}{2}\mathbf W^{2}\right)
 d\xx \,- \,\mathcal L(\v)\, .
\end{equation}
We emphasize that the minimum in right-hand side of definition \eqref{DTfunc} exists:  precisely the finite dimensional minimization problem has exactly two solutions which differs only by a sign, since by
\eqref{Z3},
\begin{equation}
\displaystyle\lim_{|\mathbf W| \to +\infty,\, \mathbf W\in \M^{N\times N}_{skew}}\int_\Omega
{\mathcal V_0}\left(\xx,\mathbb E(\v)-\textstyle\frac{1}{2}\mathbf W^{2}\right)
\, d\xx\ =\ +\infty\
\end{equation}
and ${\mathcal V_0}(\xx,\cdot)$ is strictly convex by \eqref{A e Q},\,\eqref{Z3}.
\vskip0.3cm
All along this paper we assume
\eqref{OMEGA} together with the \textsl{standard structural conditions} \eqref{incom}-\eqref{Z1},\eqref{Velliptic} 
as usual in scientific literature concerning elasticity theory
and we refer to the notations
\eqref{WhatZ},\eqref{Wh},\eqref{A e Q},\eqref{nonlinear}-\eqref{DTfunc}.

\vskip0.3cm
The pair $\mathbf f,\,\mathbf g$ describing the loads is said to be {\it equilibrated} if
\begin{equation}
\label{globalequi}
\int_{\partial\Omega} \mathbf f\cdot\mathbf z\,d\H^{N-1}+\int_{\Omega} \mathbf g\cdot\mathbf z\,d\xx\ =\ 0 \quad \forall\;
\mathbf z\in \mathcal R \,,
\end{equation}
and it is said to be {\it compatible} if
\beeq
\lab{comp}
\int_{\partial\Omega}\mathbf f\cdot\mathbf W^{2}\,\x\ d\H^{N-1}\,+\,\int_{\Omega}\mathbf g\cdot\mathbf W^{2}\,\x\ d\xx \ <\ 0\qquad\ \forall\ \mathbf W\in \M^{N\times N}_{skew}\ \ \hbox{s.t.}\ \mathbf W\not= \mathbf 0.
\eneq
\begin{definition}\label{minimseq}
Given an infinitesimal sequence $h_j$  of positive real numbers, we say that $\v_j\in H^1(\Omega;\R^N)$ is a \textsl{minimizing sequence} of the sequence of functionals $\mathcal F_{h_{j}}$
if $$(\mathcal F_{h_{j}}(\vv_j)-\inf\mathcal F_{h_j})\to 0 \quad \hbox{ as } \quad h_j\to 0_+ \,.$$
\end{definition}
We will show (see Lemma \ref{infFh}) that for every infinitesimal
sequence $h_j$ the minimizing sequences of the sequence of functionals $\mathcal F_{h_j}$
do exist.
\\
Now we can state the main result, whose proof is postponed.
\begin{theorem}
\label{mainth1}
Assume that
the \textsl{standard structural conditions} and
\eqref{globalequi},\eqref{comp} hold true.
Then for every sequence of strictly positive real numbers $h_{j}\to 0$
there are
\textsl{minimizing sequences} of the sequence of functionals $\mathcal F_{h_{j}}$.\\
Moreover for every minimizing sequence $\v_j\in H^1(\Omega;\R^N)$
of $\mathcal F_{h_{j}}$ there exist:
a subsequence, a displacement
$\v_0\in H^1(\Omega;\R^N)$ and
a constant matrix
$\mathbf W_0\in \M^{N\times N}_{skew}\,$
 such that, without relabeling,
\begin{equation}
\label{wstr}
\mathbb E(\v_j)\ \wconv\ \mathbb E(\v_{0})\qquad
 weakly \ in \ L^2(\Omega;\M^{N\times N})\,,\quad
\end{equation}
\begin{equation}\label{convsqrth}
\sqrt{h_{j}}\ \nabla\v_j\ \rightarrow\ \mathbf W_0 \qquad strongly\ in \ L^2(\Omega;\M^{N \times N})\,,
\end{equation}
\begin{equation}
\label{min}
\lim_{j\to +\infty}\mathcal F_{h_{j}}(\v_j)\ \ =\ \mathcal F(\v_0)\ \ =\ \
\min_{\v\in H^1(\Omega;\R^N)} \! \mathcal F(\v)\,,\qquad
\end{equation}
\begin{equation}\label{effe0}
\displaystyle \mathcal F(\v_0)=\int_{\om}\mathcal V_{0}(\xx, \mathbb E(\v_{0})-\textstyle\frac{1}{2}\mathbf W_{0}^{2})\,d\xx - \mathcal L(\v_{0})\ .\qquad
\end{equation}
\end{theorem}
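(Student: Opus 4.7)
The plan is to combine the Friesecke--James--Müller geometric rigidity with the compatibility condition \eqref{comp} to produce a uniform angular bound on an associated rotation, extract compactness both for the strain and for $\sqrt{h_j}\nabla\v_j$, and then pass to the limit in the rescaled nonlinear density via Taylor expansion and weak lower semicontinuity.

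\textbf{Existence and rigidity setup.} Lemma~\ref{infFh} supplies $\inf\mathcal F_{h_j}>-\infty$ while $\mathcal F_{h_j}(\mathbf 0)=0$ gives $\inf\mathcal F_{h_j}\leq 0$, so minimizing sequences $\v_j$ exist. For such a sequence I would apply the rigidity lemma of \cite{FJM} to $\mathbf F_j:=\mathbf I+h_j\nabla\v_j$, combined with \eqref{coerc}, to produce constant rotations $\mathbf R_j=\exp(\theta_j\mathbf W_j)\in SO(N)$ (with $|\mathbf W_j|^2=2$, $\theta_j\geq 0$) satisfying $\|\mathbf F_j-\mathbf R_j\|_{L^2}^2\leq Ch_j^2\int_\Omega\mathcal V_{h_j}(\xx,\nabla\v_j)\,d\xx$. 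Writing $\nabla\v_j=h_j^{-1}(\mathbf R_j-\mathbf I)+\mathbf Z_j$ with $\|\mathbf Z_j\|_{L^2}^2\leq C\int\mathcal V_{h_j}$, and invoking the Euler--Rodrigues formula \eqref{eurod}, one obtains
\[
\mathbb E(\v_j)=\mu_j\,\mathbf W_j^2+\sym\mathbf Z_j,\qquad \mu_j:=h_j^{-1}(1-\cos\theta_j)\geq 0.
\]

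\textbf{Compatibility closes the a priori bound and yields compactness.} By equilibrium \eqref{globalequi}, $\mathcal L$ is invariant under rigid displacements, so Korn yields $|\mathcal L(\v_j)|\leq C\|\mathbb E(\v_j)\|_{L^2}$. Letting $\mathbf u_j:=\v_j-h_j^{-1}(\mathbf R_j-\mathbf I)\xx$ (so $\nabla\mathbf u_j=\mathbf Z_j$), and using $\mathcal L(\mathbf W_j\xx)=0$,
\[
\mathcal L(\v_j)=\mu_j\,\mathcal L(\mathbf W_j^2\xx)+\mathcal L(\mathbf u_j),\qquad |\mathcal L(\mathbf u_j)|\leq C\|\mathbf Z_j\|_{L^2}\leq C\sqrt{\textstyle\int\mathcal V_{h_j}}.
\]
Compatibility \eqref{comp} combined with compactness of the unit sphere in $\M^{N\times N}_{skew}$ and continuity of $\mathbf W\mapsto\mathcal L(\mathbf W^2\xx)$ gives $\mathcal L(\mathbf W_j^2\xx)\leq -c<0$ uniformly. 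Plugging this into $\int\mathcal V_{h_j}\leq\mathcal L(\v_j)+o(1)$ yields
\[
\int_\Omega\mathcal V_{h_j}(\xx,\nabla\v_j)\,d\xx+c\mu_j\ \leq\ C\sqrt{\textstyle\int\mathcal V_{h_j}}+o(1),
\]
which, since $\mu_j\geq 0$, forces $\int\mathcal V_{h_j}$, $\mu_j$, $\|\mathbf Z_j\|_{L^2}$ and $\|\mathbb E(\v_j)\|_{L^2}$ to be uniformly bounded; in particular $\theta_j=O(\sqrt{h_j})$. Up to subsequences, $\tau_j:=\theta_j/\sqrt{h_j}\to\tau_0$ and $\mathbf W_j\to\mathbf W^*$ with $|\mathbf W^*|^2=2$; setting $\mathbf W_0:=\tau_0\mathbf W^*$ gives $h_j^{-1/2}(\mathbf R_j-\mathbf I)\to\mathbf W_0$, while $\sqrt{h_j}\mathbf Z_j\to 0$ in $L^2$, proving \eqref{convsqrth}. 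Moreover $\mu_j\to\tfrac12\tau_0^2$; applying Korn to $\mathbf u_j-\mathbb P\mathbf u_j$ produces a weak $H^1$-limit $\mathbf u_0$, whence $\sym\mathbf Z_j\wconv\mathbb E(\mathbf u_0)$, and setting $\v_0(\xx):=\mathbf u_0(\xx)+\tfrac12\mathbf W_0^2\,\xx$ delivers \eqref{wstr}.

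\textbf{Passage to the limit and minimality.} The identity $\tfrac12(\mathbf F_j^T\mathbf F_j-\mathbf I)=h_j\bigl(\mathbb E(\v_j)+\tfrac{h_j}{2}(\nabla\v_j)^T\nabla\v_j\bigr)$ together with Taylor at $\mathbf 0$ (valid in the neighborhood of \eqref{regV}) gives $\mathcal V_{h_j}(\xx,\nabla\v_j)=\mathcal V_0\bigl(\xx,\mathbb E(\v_j)+\tfrac{h_j}{2}(\nabla\v_j)^T\nabla\v_j\bigr)+o(1)$. By \eqref{convsqrth}, $\tfrac{h_j}{2}(\nabla\v_j)^T\nabla\v_j\to-\tfrac12\mathbf W_0^2$ strongly in $L^1$, so weak lower semicontinuity of the convex quadratic $\int\mathcal V_0(\xx,\cdot)\,d\xx$ together with weak $H^1$-continuity of $\mathcal L$ modulo rigid displacements yields
\[
\liminf_j\mathcal F_{h_j}(\v_j)\geq\int_\Omega\mathcal V_0\bigl(\xx,\mathbb E(\v_0)-\tfrac12\mathbf W_0^2\bigr)d\xx-\mathcal L(\v_0)\geq\mathcal F(\v_0)\geq\min\mathcal F.
\]
The matching $\limsup$ is supplied by the recovery sequence $\tilde\v_j:=\v^*+h_j^{-1/2}\mathbf W^*\xx$, where $\v^*$ is any minimizer of $\mathcal F$ with $\mathbf W^*$ attaining the internal minimum in \eqref{DTfunc}: then $\mathbb E(\tilde\v_j)=\mathbb E(\v^*)$, $\mathcal L(\tilde\v_j)=\mathcal L(\v^*)$ by equilibrium, $\sqrt{h_j}\nabla\tilde\v_j\to\mathbf W^*$ in $L^2$, and a direct Taylor computation gives $\mathcal F_{h_j}(\tilde\v_j)\to\mathcal F(\v^*)=\min\mathcal F$. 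Combining the two inequalities proves \eqref{min} and \eqref{effe0}. The main obstacle throughout is the angular bound $\theta_j=O(\sqrt{h_j})$: rigidity alone allows $\mathbf R_j$ to be any element of $SO(N)$, and only the strict inequality \eqref{comp} excludes $\theta_j$ of order one, which would otherwise drive $\mathcal F_{h_j}$ below any prescribed level via an ansatz of the form $h_j^{-1}(\mathbf R-\mathbf I)\xx$ with $\mathbf R\neq\mathbf I$, cf.\ Remark~\ref{controsegno}.
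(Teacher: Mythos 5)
Your existence step and your compactness step are sound, and the compactness argument is in fact more direct than the paper's: where the paper (Lemma \ref{compact}) proves $\|\mathbb E(\v_j)\|_{L^2}\le C$ by contradiction, normalizing by $t_j=\|\mathbb E(\v_j)\|_{L^2}$ and running a three-case analysis on $h_jt_j$, you extract the bound in one stroke from $\int_\Omega\mathcal V_{h_j}\,d\xx+c\,\mu_j\le C\sqrt{\smash[b]{\int_\Omega\mathcal V_{h_j}\,d\xx}}+o(1)$, the uniform negativity $\mathcal L(\mathbf W_j^{2}\xx)\le -c<0$ on the compact sphere $|\mathbf W|^{2}=2$ being exactly where \eqref{comp} enters. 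This part is correct and yields \eqref{wstr} and \eqref{convsqrth} along the same lines as the paper.

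The passage to the limit, however, has two genuine gaps, both caused by treating the Taylor expansion of $\mathcal V$ as if it held uniformly. For the liminf you assert $\mathcal V_{h_j}(\xx,\nabla\v_j)=\mathcal V_0(\xx,\mathbf G_j)+o(1)$ with $\mathbf G_j=\mathbb E(\v_j)+\tfrac{h_j}{2}\nabla\v_j^T\nabla\v_j$. But $\mathcal V(\xx,\cdot)$ is only $C^2$ on a neighborhood $\mathcal O$ of $\mathbf 0$ (hypothesis \eqref{regV}), and the argument $h_j\mathbf G_j(\xx)$ is small only in $L^2$, not in $L^\infty$: on the set where $|h_j\mathbf G_j|$ is not small there is no expansion at all, and even where there is one the remainder is of size $o(1)\,|\mathbf G_j|^2$ with $|\mathbf G_j|^2$ merely bounded in $L^1$, so its integral need not vanish. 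One therefore cannot reduce the problem to weak lower semicontinuity of $\int\mathcal V_0$. This is exactly why the paper invokes Lemmas 4.2--4.3 of \cite{DMPN}: one needs the increasing family of \emph{convex} Carath\'eodory minorants $\mathcal V_j^k\le h_j^{-2}\mathcal V(\xx,h_j\,\cdot)$ coinciding with $(1-1/k)\mathcal V_0$ on the good set $\{\mathcal V_0\le\mu^k(\xx)/h_j^{2}\}$, to which lower semicontinuity under weak $L^1$ convergence of $\mathbf G_j$ is applied before letting $k\to\infty$.

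For the limsup, the unmollified recovery sequence $\tilde\v_j=\v^*+h_j^{-1/2}\mathbf W^*\xx$ does not work, because $\nabla\v^*$ is only an $L^2$ field: on a set of positive measure one may have $\det(\mathbf I+h_j\nabla\tilde\v_j)\le 0$, whence $\mathcal W=+\infty$ by \eqref{incom} and $\mathcal F_{h_j}(\tilde\v_j)=+\infty$ for every $j$; and even disregarding the orientation constraint, the quadratic bound \eqref{upbound} is available only for arguments in $\mathcal O$, so there is no dominating function with which to pass to the limit in the integrals. The paper's recovery sequence $h_j^{-1/2}\mathbf W_{\v}\,\xx+\v\star\varphi_j$, with the mollification scale tied to $h_j$ by $h_j\eps_j^{-3}\to 0$, is designed precisely to force $h_j\bigl(\mathbb E(\w_j)+\tfrac{h_j}{2}\nabla\w_j^T\nabla\w_j\bigr)\to\mathbf 0$ in $L^\infty$, which restores both the validity of the Taylor expansion and the domination. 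Both gaps are repairable, but each requires an ingredient (truncation/convexification for the liminf, $h_j$-dependent mollification for the limsup) that is absent from your argument.
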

\begin{remark}\label{rmk2.2}
{\rm It is worth to underline that in contrast to the case of Dirichlet problem faced in \cite{DMPN},
here in pure traction problem we cannot expect even weak $H^{1}(\om;\mathbf R^{N})$ convergence of minimizing sequences.\\
Indeed choose: $\mathbf f=\mathbf g\equiv 0$ and
\begin{equation}
\lab{Wquad}
\W(\xx,\mathbf F)=\left\{\begin{array}{ll}
 &|\mathbf F^{T}\mathbf F-\Id|^{2}\  \hbox{if} \ \det \mathbf F>0\,,
 \vspace{0.1cm}\\
  &+\infty\ \hbox{ otherwise\,,}
 \end{array}\right.
\end{equation}
\begin{equation}
\v_j:=h_{j}^{-\alpha}\,\mathbf W
\,\x\qquad \hbox{with }\  \mathbf W\!\in\! \M^{N\times N}_{skew}\,, \
0< 2\alpha < 1\,,\ h_{j}\to 0_{+}\,.
\end{equation}
Then $\ \mathcal F_{h_{j}}(\v_j)= o(1)$ and, due to
$\inf \mathcal F_{h_{j}}= 0$, the sequence $\v_j$ is a minimizing sequence which has no subsequence
weakly converging in  $H^{1}(\om;\mathbf R^{N})$.
It is well known that such phenomenon takes place for pure traction problems
in linear elasticity too, but in nonlinear elasticity this difficulty cannot be easily circumvented in general,
since the fact that $\v_{j}$ is a minimizing sequence does not entail that also $\v_{j}-\mathbb P\v_{j}$ is
minimizing sequence.
In \cite{MPTJOTA} we show that for some special integrand $\mathcal W$, as in the case of {\it Green-St-Venant energy density}, if $\v_{j}$ is a minimizing sequence  then  $\w_j:=\v_{j}-\mathbb P\v_{j}$ is a minimizing sequence too and there exist a (not relabeled) subsequence of functionals  $\F_{h_{j}}$ such that the related {\it minimizing subsequence} $\w_{j}$ converges weakly in $H^{1}(\om; \mathbb R^{N})$ to a minimizer $\v_{0}$ of $\F$, provided \eqref{globalequi} and \eqref{comp} hold true.}
\end{remark}
\begin{remark}
{\rm A careful inspection of the proof  
shows that Theorem~\ref{mainth1} remains true if
  hypothesis \eqref{coerc} is weakened by assuming
\beeq\lab{V1}
\displaystyle \inf_{|\mathbf B|\ge\rho}\, \inf_{x\in\om} \,\V(x,\mathbf B)>0\ \
\forall\ \rho>0,
\eneq
\beeq\lab{V2}
\exists\, \alpha>0,\  \rho>0\ \ \hbox{such that}
\displaystyle \inf_{x\in\om} \V(x,\mathbf B)\ge\alpha  |\mathbf B|^2\ \ \forall\ \ |\mathbf B|\le\rho
\eneq
and
\beeq\lab{V3}
\displaystyle \liminf_{|\mathbf B|\to +\infty}\,  \frac{1}{|\mathbf B|} \,
\inf_{x\in\om} \V(x,\mathbf B)>0.
\eneq
This can be shown by exploiting Lemma 3.1 in \cite{DMPN}; notice that under such weaker assumption the constant appearing on the right-hand side of inequality \eqref{infFhdisplay} must be modified accordingly in Lemma \ref{infFh}.
\\
It is worth noting that  \eqref{coerc} implies  \eqref{V1},  \eqref{V2} and  \eqref{V3}.}
\end{remark}
Here we show some preliminary remarks about the main result
and the limit functional $\mathcal F$.
\begin{remark}
\lab{Fform}
\rm If $N=2$,
then for every $W\in \M^{N\times N}_{skew}$ there is $a\in\mathbb R$ s.t. $\mathbf W^2=- a^2\mathbf I$, hence
\eqref{DTfunc}  reads
\begin{equation}\lab{dta}
\F(\v)\,\,=\displaystyle\min_{a\in \mathbf R}\int_\Omega \mathcal V_{0}(\xx, \mathbb E(\v)+\textstyle\frac{a^{2}}{2}\mathbf I)\, d\xx- \mathcal L(\v)\,,
\end{equation}
therefore, a minimizer $a_{*}(\v)$ of functional \eqref{dta} (with respect to $a\in\R$ with fixed $\vv$) 
fulfils
\begin{equation*}
a_{*}^{3}(\v)\int_{\om}\mathcal V_{0}(x,\mathbf I)\,d\xx+a_{*}(\v)\int_{\om}D\mathcal V_{0}(\xx,\mathbf I)\cdot \mathbb E(\v)\,d\xx=0
\end{equation*}
that is
\begin{equation}\label{astar}
a_{*}^{2}(\v)=\left (\int_{\om}\mathcal V_{0}(\xx,\mathbf I)\,dx\right )^{-1}\left (\int_{\om}D\mathcal V_{0}(\xx,\mathbf I)\cdot \mathbb E(\v)\,d\xx\right)^{-}
\end{equation}
and
\begin{equation}
\displaystyle \mathcal F(\v)\ =\ \int_{\om}\mathcal V_0\left (\xx\,,\,\mathbb E(\v)+\textstyle\frac{a_{*}^{2}(\v)}{2}\mathbf I\right)\,d\xx\,-\,\mathcal L(\v)\ .
\end{equation}
By taking into account that $\mathcal V_{0}$ is a quadratic form,
we can make explicit the gap between the new functional $\mathcal F$ and the classical linear elasticity functional $\mathcal E$, when $N=2$\,:
\begin{equation}\lab{F}
\begin{array}{rl}
&\displaystyle \mathcal F(\v)\ = \\
& =\displaystyle\int_{\om}\mathcal V_0\left (\xx,\mathbb E(\v)\right)
\,d\xx -\frac{1}{4}\left (\int_{\om}\mathcal V_0(\xx,\mathbf I)
d\xx\right )^{\!-1}\left [\left (\int_{\om}D\mathcal V_0(\xx,\mathbf I)\!\cdot\!\mathbb E(\v)
\,d\xx\right)^{\!-}\right ]^{2}\!\!\!-\mathcal L(\v) \\
&=\displaystyle
\mathcal E(\vv)
-\frac{1}{4}\left (\int_{\om}\mathcal V_0(\xx,\mathbf I)
d\xx\right )^{\!-1}\left [\left (\int_{\om}D\mathcal V_0(\xx,\mathbf I)\!\cdot\!\mathbb E(\v)
\,d\xx\right)^{\!-}\right ]^{2}\,.
\end{array}
\end{equation}
In particular, if $N=2$, $\,\lambda,\, \mu> 0\,$ and
\begin{equation}\lab{GSV1}
\W(\xx,\mathbf F)=\left\{\begin{array}{ll}
 &\mu |\mathbf F^{T}\mathbf F-I|^{2}+\frac{\lambda}{2}|\ \hbox{\rm Tr}\ (\mathbf F^{T}\mathbf F-I)|^{2}
 \quad  \hbox{if} \ \det \mathbf F>0
 \vspace{0.2cm}\\
 &+\infty\quad \hbox{ otherwise,}\\
 \end{array}\right.
\end{equation}
then $\mathcal V_0(\xx,\mathbf B)\,=\,4\mu|\mathbf B|^{2}+2\lambda|\hbox{Tr}\mathbf B|^{2}$ and we get
\begin{equation}\label{astarGSV}
a_{*}^{2}(\v)\ =\ |{\om}|^{-1}\left (\int_{\om}\hbox{\rm div}\,\v\,d\xx\right)^{\!-}\ .
\end{equation}
Roughly speaking, this means that in 2D the global energy $\mathcal F(\v)$ of a displacement $\v$ is the same of linearized elasticity
if the area of the associated deformed configuration $
\yy
(\Omega)$ is greater than
the area of $\om$.
\end{remark}
\begin{remark}\lab{unbound}
{\rm The compatibility condition \eqref{comp} cannot be dropped in Theorem \ref{mainth1}
even if the (necessary) condition \eqref{globalequi} holds true. Moreover plain substitution of strong with weak inequality
in \eqref{comp} leads to a lack of compactness for minimizing sequences.
\vskip0.1cm
Indeed, if $\n$ denotes the outer unit normal vector to $\partial\om$ and we choose
$\mathbf f=f\n$ with $f<0$, $\mathbf g\equiv 0$ then
\beeq
\int_{\partial\Omega}\mathbf f\cdot\mathbf W^{2}\,\x\,d\H^{N-1}\ =\  2f(\Tr\mathbf W^{2})\vert\Omega\vert \ >\ 0
\qquad \forall\ \mathbf W\!\in\! \M^{N\times N}_{skew}\setminus \{\mathbf 0\}\,,
\eneq
say, the strict inequality in \eqref{comp} is reversed in a strong sense by any $\mathbf W\!\in\! \M^{N\times N}_{skew}\setminus \{\mathbf 0\}$;\\
fix a sequence of positive real numbers such that $h_{j}\!\to\! 0$\,, 
$\mathbf W\!\in\! \M^{N\times N}_{skew},\  \mathbf W\not\equiv \mathbf 0$,
and set $\v_j={h_{j}}^{-1}(\frac{1}{2}\mathbf W^2+\frac{\sqrt 3}{2}\mathbf W)\,\xx\ $;
then $\mathbf I +\big(\frac 1 2 \mathbf W^2+\frac {\sqrt 3}{2}\mathbf W\big)\in SO(N) $ and, by frame indifference,
\begin{equation}
\mathcal F_{h_{j}}(\v_j)=-\frac{f}{2h_{j}}\int_{\partial \Omega}\mathbf W^{2}\x\cdot\n\,d\H^{n-1}=-\frac{f}{2h_{j}}(\Tr\mathbf W^{2})|\Omega|\rightarrow -\infty.
\end{equation}
On the other hand, assume
$\W$ as in \eqref{Wquad} and
$\mathbf f\!=\!\mathbf g\!\equiv\! \mathbf 0$, so that the compatibility inequality is susbstituted by the weak inequality;
%
if $\v_j$ are defined as above then,
hence
 \begin{equation}
\F_{h_{j}}(\v_j)=0=\inf \F_{h_{j}}
\end{equation}
but $\mathbb E(v_j)$ has no weakly convergent subsequences in $L^2(\Omega;\M^{N\times N})$.\\
}
\end{remark}
%
\begin{remark} \label{tensatbdry}\rm It is worth noticing that the compatibility condition \eqref{comp} holds true
when $\mathbf g\equiv 0$, $\mathbf f=f\n$ with $f>0$ and $\n$ the outer unit normal vector to $\partial\om$.\\
Indeed let $\mathbf W\in \M^{N\times N}_{skew}, \mathbf W\not\equiv \mathbf 0$:
hence by  \eqref{globalequi} and the Divergence Theorem we get
\beeq
\int_{\partial\Omega}\mathbf f\cdot\mathbf W^{2}\,\x\,d\H^{N-1}= 2f(\Tr\mathbf W^{2})\vert\Omega\vert < 0
\eneq
thus proving \eqref{comp} in this case.
%
Roughly speaking this means that in presence of tension-like surface forces and of null body forces the compatibility condition holds true.\\
\end{remark}
\begin{remark}
{\rm  It is possible to  observe some analogy between the energy functional \eqref{DTfunc} and the results in
\cite{DDT},\cite{DDT2} where the approximate theory of small strain accompanied
by \textsl{moderate rotations} is discussed under suitable  kinematical assumptions.
More precisely, if $\mathbf F= \mathbf I+ h\nabla\v$ is the deformation gradient, $\mathbf F=\mathbf R\mathbf U$ the polar decomposition, \cite{DDT} shows that the assumptions
\begin{equation}\lab{assrot}
\mathbf R=\mathbf I+O(\sqrt h),\ \ \mathbf U=\mathbf I+O(h)
\end{equation} as $h\rightarrow 0$ (in the sense of pointwise convergence) are equivalent to
\begin{equation}
\mathbb E(\v)=O(1),\ \ h ({\rm skew}\nabla\v)=O(\sqrt h)
\end{equation}
 as $h\rightarrow 0$ again in the sense of pointwise convergence. Therefore
\begin{equation}\lab{sviluppo}
\mathbf U= \mathbf I+h\big ( \mathbb E(\v)-\textstyle\frac{1}{2}({\rm skew}\nabla\v\big)^{2}) + o(h)
\end{equation}
 and  the point-wise limit  of $\F_{h}$ ( not the $\Gamma$-limit !) becomes
 \begin{equation*}
\int_\Omega \mathcal V_{0}\big(\xx, \mathbb E(\v)-\textstyle\frac{1}{2}({\rm skew}\nabla\v)^{2}\big)\, d\xx- \mathcal L(\v) \,,
    \end{equation*}
which is quite similar to \eqref{DTfunc}.
    \\
We highlight the fact that \eqref{assrot} cannot be understood in the sense of $L^{2}(\om,\M^{N\times N})$
whenever $\v\equiv \v_{*}$ on a closed subset $\Sigma$ of
$\partial \om$ with ${\mathcal H}^{n-1} (\Sigma)>0$, since by Korn and Poincar\`e inequalities we get
\begin{equation*}
\int_{\om}|\nabla \v|^{2}\,d\xx\le C\Big (\int_{\om}|\mathbb E(\v)|^{2}\,d\xx+\int_{\Sigma}|\v_{*}|^{2}\,d\H^{N-1}\Big )\,,
\end{equation*}
therefore if $\mathbb E(\v)=O(1)$ then $h\nabla \v= O(h)$, thus contradicting the second of \eqref{assrot}.
On the other hand   a careful application of  the rigidity Lemma of \cite{FJM} show that if $\mathbb E(\v)=O(1)$ and $ \mathbf U= \mathbf I+ O(h)$ in the sense of $L^{2}(\om,\M^{N\times N})$, then there exists  a constant skew symmetric matrix $\mathbf W$ such that $h\nabla\v^{T}\nabla\v =-\mathbf W^{2}+o(1)$ in the sense of $L^{1}(\om,\M^{N\times N})$ (see the proof of Lemma \ref{convfunc} below). Therefore
\begin{equation}\lab{sviluppobis}
\mathbf U= \mathbf I+h\big( \mathbb E(\v)- {\mathbf W^{2}}/{2}\big) + o(h)
\end{equation}
where equality is understood in the sense of $L^{1}(\om,\M^{N\times N})$ and $\mathbf W$ a constant skew symmetric matrix.}
\end{remark}
%
\section{Proofs}\label{Section Proofs}
We recall three basic inequalities exploited in the sequel, for reader's convenience and in order to label the related constants.
\vskip0.1cm
{\bf Poincar\`e Inequality}. There exists a constant $C_{P}=C_P(\om)$ such that
\begin{equation}\label{poi}
\left\|\,\v - \hbox{$- \!\!\!\!\!\int_{\,\Omega}$}\, \v \,\right\|_{L^2(\Omega;\R^N)} \ \leq \ C_P\,\|\nabla w\|_{L^2(\Omega;\M^{N \times N})} \qquad \forall \,\v\in H^1(\om;\R^N)\ .
\end{equation}
\vskip0.1cm
{\bf Korn-Poincar\`e Inequality}. There exists a constant $C_{K}=C_K(\om)$ such that
\begin{equation}\label{kornpoi}
\|\vv-\mathbb P\vv\|_{L^2(\Omega;\R^N)}\,+\,
\|\vv-\mathbb P\vv\|_{L^2(\partial\Omega;\R^N)}
\,\le\ C_{K}\ \|\mathbb E(\vv)\|_{L^2(\Omega;\M^{N \times N})}
\quad \forall \,\v\in H^1(\om;\R^N)\ .
\end{equation}
\vskip0.1cm
{\bf Geometric Rigidity Inequality} (\cite{FJM}). There exists a constant $C_G=C_{G}(\om)$ such that for every $\yy\in H^{1}(\om;\mathbb R^{N})$
there is an associated rotation $\mathbf R\in SO(N)$ such that we have
\begin{equation}\label{muller}
\int_{\om}|\nabla\yy-\mathbf R|^{2}\,d\xx\le C_{G}\int_{\om}\hbox{dist}^{2}(\nabla \yy; SO(N))\,d\xx.
\end{equation}
The first step in our analysis is the next lemma showing that if \eqref{globalequi},\,\eqref{comp} hold true then the functionals $\F_{h}$  are bounded
from below uniformly with respect to $h\in\N$: this implies the existence of minimizing sequences of the sequence of functionals $\F_{h_j}$ (see Definition \ref{minimseq}).
\begin{lemma}\lab{infFh}  Assume \eqref{globalequi} and \eqref{comp}. Then
\begin{equation}\label{infFhdisplay}
  \displaystyle\inf_{h>0}\,\inf_{\vv\in H^1} \ \F_{h}(\vv)\ > \ -\, \frac {C_P^{2}\,C_G}{C}\,\big(\|\mathbf f\|_{L^2}^2+\|\mathbf g\|_{L^2}^2\big)\,,
\end{equation}
where $C$ is the coercivity constant in \eqref{(2.18)} and $C_P,
\,C_G$ are the constants related to the basic inequalities above. %
\\
Actually \eqref{infFhdisplay} holds true even if strict inequality is replaced by weak inequality in \eqref{comp}.
\end{lemma}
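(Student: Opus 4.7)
The plan is to produce, for each $\vv \in H^1(\Omega;\R^N)$ and $h>0$, a decomposition of $\vv$ into the linearization of a rigid motion plus a small correction, and then use compatibility to absorb the potentially divergent contribution of the rigid part into $\mathcal L(\vv)$. Set $\yy := \id + h\vv$, let $E_h := \int_\Omega \mathcal V_h(\xx,\nabla\vv)\,d\xx$, and observe that the coercivity \eqref{coerc}, combined with the singular-value identity $|\mathbf F^T\mathbf F - \mathbf I|^2 = \sum_i(\sigma_i^2-1)^2 \geq \sum_i(\sigma_i-1)^2 = \dist^2(\mathbf F,SO(N))$ valid when $\det \mathbf F > 0$, gives $\mathcal W(\xx,\nabla\yy) \geq C\,\dist^2(\nabla\yy,SO(N))$. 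Applying the Geometric Rigidity inequality \eqref{muller} yields a constant rotation $\mathbf R_h\in SO(N)$ such that
\begin{equation*}
\|\nabla\yy - \mathbf R_h\|_{L^2}^2 \;\leq\; C_G\int_\Omega \dist^2(\nabla\yy,SO(N))\,d\xx \;\leq\; \frac{C_G}{C}\, h^2 E_h .
\end{equation*}

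Set $\boldsymbol\varphi_h(\xx) := \yy(\xx) - \mathbf R_h\xx$, so that $\nabla\boldsymbol\varphi_h = \nabla\yy-\mathbf R_h$ and $h\vv = \boldsymbol\varphi_h + (\mathbf R_h - \mathbf I)\id$. By the Euler--Rodrigues formula \eqref{eurod} there exist $\vartheta_h\in\R$ and $\mathbf W_h\in\M^{N\times N}_{skew}$ with $|\mathbf W_h|^2 = 2$ such that $\mathbf R_h - \mathbf I = \sin\vartheta_h\,\mathbf W_h + (1-\cos\vartheta_h)\,\mathbf W_h^2$. Expanding $\mathcal L(\vv)$ against this decomposition, the equilibrium condition \eqref{globalequi} kills the contribution of $\sin\vartheta_h\,\mathbf W_h\id$ (which belongs to $\mathcal R$), and the compatibility condition \eqref{comp} gives
\begin{equation*}
h^{-1}(1-\cos\vartheta_h)\,\mathcal L(\mathbf W_h^2 \id) \;\leq\; 0 ,
\end{equation*}
since $1-\cos\vartheta_h \geq 0$. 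Hence $\mathcal L(\vv) \leq h^{-1}\mathcal L(\boldsymbol\varphi_h)$. This is the decisive step: the compatibility condition converts the a priori $O(h^{-1})$ term coming from finite rotations into a favorable (non-positive) contribution, and is exactly what allows the argument to survive in the weak-inequality case as well.

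It remains to estimate $h^{-1}\mathcal L(\boldsymbol\varphi_h)$. Since $\mathbb P\boldsymbol\varphi_h\in\mathcal R$ and the loads are equilibrated, $\mathcal L(\boldsymbol\varphi_h) = \mathcal L(\boldsymbol\varphi_h - \mathbb P\boldsymbol\varphi_h)$, and the Korn--Poincar\'e inequality \eqref{kornpoi} combined with $\|\mathbb E(\boldsymbol\varphi_h)\|_{L^2} \leq \|\nabla\boldsymbol\varphi_h\|_{L^2} \leq h\sqrt{C_G E_h / C}$ bounds $\boldsymbol\varphi_h-\mathbb P\boldsymbol\varphi_h$ in $L^2(\Omega)$ and in $L^2(\partial\Omega)$ by the same quantity, up to a dimensional constant. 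Cauchy--Schwarz against $(\mathbf f,\mathbf g)$ then produces
\begin{equation*}
|h^{-1}\mathcal L(\boldsymbol\varphi_h)| \;\leq\; K\sqrt{C_G/C}\,\sqrt{E_h}\,\sqrt{\|\mathbf f\|_{L^2}^2 + \|\mathbf g\|_{L^2}^2},
\end{equation*}
and Young's inequality $ab \leq \tfrac12 a^2 + \tfrac12 b^2$ absorbs a portion of $E_h$:
\begin{equation*}
\F_h(\vv) = E_h - \mathcal L(\vv) \;\geq\; E_h - |h^{-1}\mathcal L(\boldsymbol\varphi_h)| \;\geq\; \tfrac12 E_h - \tfrac{K^2 C_G}{2C}(\|\mathbf f\|_{L^2}^2 + \|\mathbf g\|_{L^2}^2),
\end{equation*}
giving the uniform lower bound with a constant of the claimed structure $C_\#^2 C_G/C$. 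The argument is uniform in $h$ and never uses strict inequality in \eqref{comp}, proving the last assertion. The main obstacle is the bookkeeping in the third paragraph: the rigidity inequality naturally controls $\nabla\yy - \mathbf R_h$, but the loads act on $\vv$, so one must use the rotational decomposition together with equilibrium and compatibility to separate the ``rigid'' piece (which would otherwise be of order $h^{-1}$) from the genuinely small correction $\boldsymbol\varphi_h$; without \eqref{comp} the argument fails, consistently with the counterexample in Remark~\ref{unbound}.
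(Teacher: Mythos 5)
Your proof is correct and follows essentially the same route as the paper's: coercivity plus the geometric rigidity inequality to produce a constant rotation $\mathbf R_h$ with $\|\nabla\yy-\mathbf R_h\|_{L^2}^2\le (C_G/C)h^2E_h$, the Euler--Rodrigues formula to split $\mathbf R_h-\mathbf I$ into a skew part annihilated by \eqref{globalequi} and a $(1-\cos\vartheta_h)\mathbf W_h^2$ part made non-positive by \eqref{comp}, and a Poincar\'e-type estimate plus Young's inequality to absorb the remaining load term into the stored energy. The only cosmetic difference is that you apply Korn--Poincar\'e to $\boldsymbol\varphi_h-\mathbb P\boldsymbol\varphi_h$ where the paper applies plain Poincar\'e to $\yy-\mathbf R\xx-\mathbf c$, so your final constant carries $C_K$ in place of $C_P$ (and in fact your route handles the boundary integral in $\mathcal L$ more explicitly, since \eqref{kornpoi} controls the trace term directly).
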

\begin{proof} 
Let $\v\in H^1(\Omega;\R^N)$ and $\yy=\x+h\v$.
Since $\det \nabla \yy>0\, $a.e., by polar decomposition for a.e. $\xx$ there exist a rotation $\mathbf R_h(\xx)$ and a symmetric positive definite matrix $\mathbf U_h(\xx)$ such that $\nabla \yy(\xx)=\mathbf R_h (\xx)\mathbf U_h(\xx)$, hence $\nabla\yy^{T}\nabla \yy=\mathbf U_h^2$, so that for a.e. $\xx$
\begin{equation}\label{polardec}
\begin{array}{lll}
  |\nabla\yy^{T}\nabla \yy-\mathbf I|^{2}
  &=&
  |\mathbf U_h^2-\mathbf I|^{2}=
  |(\mathbf U_h-\mathbf I)(\mathbf U_h+\mathbf I)|^{2} \geq
  |\mathbf U_h-\mathbf I|^{2}= \vspace{0.2cm}\\
   &=&
  |(\nabla \yy-\mathbf R_h)|^{2}\,\geq \, {\rm dist} ^2\!\left( \nabla \yy, SO(N)\right) .
  \end{array}
\end{equation}
By \eqref{coerc},\eqref{polardec} and the Geometric Rigidity Inequality \eqref{muller} there exists a constant rotation $\mathbf R$ such that
\begin{equation}\label{lowerbd}\begin{array}{ll}
&\displaystyle\F_{h}(\v)\ge Ch^{-2}
\int_{\Omega}|\nabla\yy^{T}\nabla \yy-\mathbf I|^{2}\,d\x-h^{-1}\mathcal L(\yy-\x)\ge\\
&\\
&\displaystyle\ge \frac C {C_G}\,h^{-2}\,\int_{\Omega}|\nabla \yy-\mathbf R|^{2}\,d\x-h^{-1}\mathcal L(\yy-\x).\\
\end{array}
\end{equation}
If now
$$\mathbf c:=|\Omega|^{-1}\int_{\Omega}(\yy-\mathbf R\x)\,d\x$$
by \eqref{globalequi} , by Poincar\`e and Young
 inequality we get, for every $\alpha > 0$,
 \begin{equation*}
   \|\yy-\mathbf R\xx-\mathbf c \|_{L^2} \ \leq \ C_P\,\| \nabla (\yy - \mathbf R\xx ) \|_{L^2} \ =
    \ C_P\,\| \nabla \yy - \mathbf R \|_{L^2}\ ,
 \end{equation*}
 \vskip0.1cm
 \begin{equation*}\begin{array}{ll}
&\mathcal L(\yy-\mathbf R\x-\mathbf c)\,\le\, C_P\,\|\nabla \yy - \mathbf R\|_{L^2}\, \big(\|\mathbf f\|_{L^{2}}+\|\mathbf g\|_{L^{2}}\big)
\ \leq
\\
&\\
& \displaystyle
\leq \ \alpha^{-1}\, \frac {C_P} {2}\, \|\nabla \yy - \mathbf R\|_{L^2}^2 \ +\ \alpha\, \frac {C_P} 2 \,
\big(\|\mathbf f\|_{L^{2}}+\|\mathbf g\|_{L^{2}}\big)^2
\\ & \\
& \displaystyle
\leq \ \alpha^{-1}\, \frac {C_P} {2}\, \|\nabla \yy - \mathbf R\|_{L^2}^2 \ +\ \alpha\,  {C_P} \,
\big(\|\mathbf f\|_{L^{2}}^2+\|\mathbf g\|_{L^{2}}^2\big) \,.\end{array}
\end{equation*}
By choosing $\,\alpha\,= \,h\, C_P\, C_g/C$
\begin{equation}\label{lowerbd2}
  \begin{array}{ll}
&\mathcal L(\yy-\x)\,=\, \mathcal L(\yy-\mathbf R\x-\mathbf c) \, +\, \mathcal L(\mathbf R\x-\mathbf x)  \,\le
\\
&\\
& \displaystyle
\leq \ \alpha^{-1}\, \frac {C_P} {2}\, \|\nabla \yy - \mathbf R\|_{L^2}^2 \ +\ \alpha\,  {C_P}  \,
\big(\|\mathbf f\|_{L^{2}}+\|\mathbf g\|_{L^{2}}\big)^2
\, +\, \mathcal L(\mathbf R\x-\mathbf x) \ =
\\ & \\
& \displaystyle
= \ h^{-1}\, \frac {C/C_G} {2}\, \|\nabla \yy - \mathbf R\|_{L^2}^2 \ +\ \frac {C_P^{\,2}} {C/C_G}\, h \,
\big(\|\mathbf f\|_{L^{2}}^2+\|\mathbf g\|_{L^{2}}^2\big) \, +\, \mathcal L(\mathbf R\x-\mathbf x)\,.\end{array}
\end{equation}
Exploiting the standard representation \eqref{eurod} of the rotation
$\mathbf R=\mathbf I+\mathbf W \sin\vartheta+(1-\cos\vartheta)\mathbf W^{2}$
for suitable $\vartheta\in\R$ and $\mathbf W\in\M^{N\times N}_{skew}$,
by
\eqref{globalequi},\eqref{comp},\eqref{lowerbd} and \eqref{lowerbd2} 
we get 
\begin{equation}\label{lowerbd3}\begin{array}{ll}
\displaystyle\F_{h}(\v) & \displaystyle
\ge\ \frac {C/C_G} {2} \, h^{-2}\int_{\Omega}|\nabla \yy-\mathbf R|^{2}\,d\x\,-\,\frac {C_P^{\,2}} {C/C_G}\,
\big(\|\mathbf f\|^{2}_{L^{2}}+\|\mathbf g\|^{2}_{L^{2}}\big)\,-\,h^{-1}\mathcal L\big((\mathbf R-\mathbf I)\x\big)\,\ge\\
&\\
&\ge\
\displaystyle
-\, \frac {C_P^{\,2}\,C_G} C \, \big(\|\mathbf f\|^{2}_{L^{2}}+\|\mathbf g\|^{2}_{L^{2}}\big)
\qquad \qquad \forall \vv \in H^1(\Omega;\R^N)\,, \ \forall h>0\,.
\end{array}
\end{equation}
\vskip-0.6cm
\hfill\end{proof}
\begin{lemma}
\label{corcurl}
Let
$\v_{n}\in H^1(\Omega;\R^N)$ be a sequence such that $\mathbb E(\v_n)\wconv \mathbf T$ in $L^2(\Omega;\M^{N\times N})$.
Then there exists $\w\in H^1(\Omega;\R^N)$ such that $\mathbf T=\mathbb E(\w)$.
 If in addition  $\nabla\v_{n}\wconv \mathbf G$ in $L^{2}(\om;\mathbf R^{N})$
then there exists a constant matrix $\mathbf W\in \M^{N\times N}_{skew}$ such that
$\nabla\w=\mathbf G-\mathbf W$.
\end{lemma}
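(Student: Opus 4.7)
The plan is to reduce the statement to standard weak compactness in $H^1$ by removing the infinitesimal--rigid part from each $\v_n$. Set $\w_n := \v_n - \mathbb P \v_n$. Since $\mathbb P \v_n$ is an infinitesimal rigid displacement one has $\mathbb E(\w_n) = \mathbb E(\v_n)$, so by hypothesis $\mathbb E(\w_n) \wconv \mathbf T$ in $L^2(\Omega;\M^{N\times N})$ and in particular $\{\mathbb E(\w_n)\}$ is bounded in $L^2$. Because $\mathbb P \w_n = 0$, the Korn--Poincar\'e inequality \eqref{kornpoi} controls $\|\w_n\|_{L^2}$ by $\|\mathbb E(\w_n)\|_{L^2}$, and a standard Korn inequality then controls $\|\nabla \w_n\|_{L^2}$; hence $\{\w_n\}$ is bounded in $H^1(\Omega;\R^N)$.

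By weak compactness extract a subsequence (not relabeled) with $\w_n \wconv \w$ in $H^1(\Omega;\R^N)$. Then $\mathbb E(\w_n) \wconv \mathbb E(\w)$ weakly in $L^2$, and uniqueness of the weak limit gives $\mathbf T = \mathbb E(\w)$. This settles the first assertion.

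For the second part, write $\mathbb P \v_n(\xx) = \aa_n + \mathbf W_n\,\xx$ with $\aa_n\in\R^N$ and $\mathbf W_n \in \M^{N\times N}_{skew}$, so that $\nabla \mathbb P\v_n \equiv \mathbf W_n$ and
\[
\nabla \v_n \,-\, \nabla \w_n \,=\, \mathbf W_n
\]
is a constant skew--symmetric matrix. Along the chosen subsequence both $\nabla \v_n \wconv \mathbf G$ and $\nabla \w_n \wconv \nabla \w$ in $L^2(\Omega;\M^{N\times N})$, hence $\mathbf W_n \wconv \mathbf G - \nabla \w$ in $L^2$. Since $\|\mathbf W_n\|_{L^2}^2 = |\mathbf W_n|^2\,|\Omega|$, the weak convergence forces $\{|\mathbf W_n|\}$ to be bounded in $\M^{N\times N}$; along a further subsequence $\mathbf W_n \to \mathbf W$ in $\M^{N\times N}_{skew}$, and such convergence of constants is automatically strong in $L^2$. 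Uniqueness of the weak limit then yields $\nabla \w = \mathbf G - \mathbf W$ with $\mathbf W$ skew--symmetric, as required.

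None of the steps is genuinely delicate: Korn's inequalities supply the sole analytic input. The only points demanding care are the repeated passage to subsequences and the observation that a sequence of matrices, viewed as constants in $L^2(\Omega;\M^{N\times N})$, can weakly converge only if it converges in the matrix norm, in which case the convergence is actually strong in $L^2$.
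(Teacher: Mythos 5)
Your proof is correct and follows essentially the same route as the paper: subtract the projection $\mathbb P\v_n$ onto infinitesimal rigid displacements, use Korn--Poincar\'e to get $H^1$-boundedness of $\w_n=\v_n-\mathbb P\v_n$, extract a weakly convergent subsequence to identify $\mathbf T=\mathbb E(\w)$, and then use the fact that the $\nabla\mathbb P\v_n$ live in the finite-dimensional space of constant skew-symmetric matrices to obtain $\mathbf W$. Your observation that weak $L^2$-convergence of constant matrices forces boundedness and hence (strong) convergence in the matrix norm is exactly the finite-dimensionality argument the paper invokes.
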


\begin{proof} Since  $\mathbb E(\v_n)\wconv \mathbf T$ in $L^2(\Omega;M^{N\times N})$ then $\v_{n}-\mathbb P \v_n$ are equibounded in $H^{1}(\om; \mathbf R^{N})$, where $\mathbb P $  the projection on the set $\mathcal R$ of infinitesimal rigid displacements.
Therefore, up to subsequences, we can assume that  $\w_{n}:=\v_{n}-\mathbb P \v_n\wconv \w$ in $H^{1}(\Omega;\mathbf R^{N})$
and we get
\begin{equation}
\lab{dec}
\nabla\w_{n}=\mathbb E(\w_{n})+{\rm skew}\nabla\w_n=\mathbb E(\v_{n})+{\rm skew}\nabla\w_n
\end{equation}
hence there exists $\mathbf S\in L^2(\Omega;\M_{skew}^{N\times N})$ such that
${\rm skew}\nabla\w_n\wconv\mathbf S$ in $L^2(\Omega;\M^{N\times N})$ and by letting $n\to +\infty$ into \eqref{dec}
we have  $\mathbf T+\mathbf S=\nabla \w$.
 Since $\mathbf S\in L^2(\Omega;\M_{skew}^{N\times N})$ we get
$\mathbb E(\w)=\mathbf T$ and  if in addition
 $\nabla\v_{n}\wconv \mathbf G$ in $L^{2}(\om;\M^{N\times N})$  then there exists a  constant
$\mathbf W\in \M^{N\times N}_{skew}$
such that
$\nabla \mathbb P \v_{n}\wconv \mathbf W$ in $L^{2}(\om;\M^{N\times N})$,
actually converging in the finite dimensional space of constant skew symmetric matrices,
thus proving the Lemma.
\end{proof}

\begin{remark}
{\rm It is worth noting that if $\mathbb E(\v_j)\wconv \mathbf T$ in
$L^2(\Omega;\M^{N\times N})$ then by Lemma~\ref{corcurl} there exists $\v\in H^1(\Omega;\R^N)$ such that
$\mathbf T=\mathbb E(\v)$ and if $\mathbf T=\mathbb E(\w)$ for some $w\in H^1(\Omega;\R^N)$, then $\v-\w$
is an infinitesimal rigid displacement in $\Omega$, {\emph i.e.}
$\mathbb E(\v-\w)=\mathbf 0$.
}
\end{remark}
\vspace{0.5cm}
Next we show a preliminary convergence property: we compute a kind of Gamma limit
of the sequence of functional $\mathcal F_h$ with respect to weak $L^2$ convergence of linearized strains.
\begin{lemma}(\textbf{energy convergence})
\label{convfunc}
Assume  that  \eqref{globalequi}  holds true and let $h_{j}\to 0$ be a decreasing sequence. Then
\begin{itemize}
\item[i)] For every $\v_j,\v\in H^1(\Omega;\R^N)$ such that
$\mathbb E(\v_j)\wconv\mathbb E(\v)$ in $L^2(\Omega;\M^{N\times N})$ we have
$$\liminf_{j\rightarrow +\infty} \mathcal F_{h_{j}}(\v_j)\geq  \mathcal F(\v).$$\
\item[ii)] For every $\v\in H^1(\Omega;\R^N)$ there exists a sequence $\v_j\in H^1(\Omega;\R^N)$ such that $\mathbb E(\v_j)\wconv \mathbb E(\v)$ in $L^2(\Omega;\M^{N\times N})$ and
$$\limsup_{j\rightarrow +\infty}  \mathcal F_{h_{j}}(\v_j)\leq  \mathcal F(\v).$$
\end{itemize}
\end{lemma}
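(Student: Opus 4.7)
The plan is to follow the linearization-of-nonlinear-elasticity scheme of \cite{DMPN},\cite{FJM}, adapted to track the skew-symmetric part of the gradient, which here is not a priori bounded in $L^2$. The liminf inequality (i) carries the technical load; the recovery sequence (ii) is essentially explicit, built from the skew matrix that realizes the minimum in the definition \eqref{DTfunc} of $\mathcal F$.

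For (i) we may assume $\liminf_j\mathcal F_{h_j}(\vv_j)<+\infty$ and extract a subsequence attaining this value. By \eqref{globalequi} and the Korn--Poincar\'e inequality \eqref{kornpoi}, $\mathcal L(\vv_j)=\mathcal L(\vv_j-\mathbb P\vv_j)$ is controlled by $\|\mathbb E(\vv_j)\|_{L^2}$, so $\int_\Omega\mathcal V_{h_j}(\xx,\nabla\vv_j)\,d\xx$ is bounded. Setting $\yy_j=\xx+h_j\vv_j$, the coercivity \eqref{coerc} together with the polar decomposition bound from the proof of Lemma \ref{infFh} gives $\int_\Omega{\rm dist}^2(\nabla\yy_j,SO(N))\,d\xx\le Ch_j^2$, and the Geometric Rigidity inequality \eqref{muller} furnishes constant rotations $\mathbf R_j\in SO(N)$ with $\|\nabla\yy_j-\mathbf R_j\|_{L^2}\le Ch_j$. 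Writing $\mathbf R_j=\mathbf I+\sin\vartheta_j\mathbf W_j+(1-\cos\vartheta_j)\mathbf W_j^2$ via \eqref{eurod} with $|\mathbf W_j|^2=2$ and matching symmetric parts, the $L^2$ bound on $\mathbb E(\vv_j)$ forces $h_j^{-1}(1-\cos\vartheta_j)$ to stay bounded, i.e.\ $\vartheta_j=O(\sqrt{h_j})$. Up to a further extraction $h_j^{-1/2}\sin\vartheta_j\,\mathbf W_j$ converges to a constant skew $\mathbf W_0$, and
\[
\sqrt{h_j}\,\nabla\vv_j\ \longrightarrow\ \mathbf W_0\quad\text{strongly in }L^2(\Omega;\M^{N\times N}).
\]
Setting $\mathbf B_j:=\mathbb E(\vv_j)+\tfrac{h_j}{2}\nabla\vv_j^T\nabla\vv_j$, this strong convergence yields $\tfrac{h_j}{2}\nabla\vv_j^T\nabla\vv_j\to-\tfrac12\mathbf W_0^2$ in $L^1$, hence $\mathbf B_j\weak\mathbb E(\vv)-\tfrac12\mathbf W_0^2$ weakly in $L^1$. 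Writing $\mathcal V_{h_j}(\xx,\nabla\vv_j)=h_j^{-2}\mathcal V(\xx,h_j\mathbf B_j)$ and splitting $\Omega$ into $\{h_j|\mathbf B_j|\le\varepsilon\}$ (where $h_j\mathbf B_j\in\mathcal O$) and its complement (whose measure vanishes by Chebyshev), Taylor's formula from \eqref{Z1},\eqref{regV},\eqref{Velliptic} on the good set, $\mathcal V\ge 0$ on the bad set, and weak lower semicontinuity of the convex quadratic $\mathbf B\mapsto\int_\Omega\mathcal V_0(\xx,\mathbf B)\,d\xx$ give
\[
\liminf_j\int_\Omega\mathcal V_{h_j}(\xx,\nabla\vv_j)\,d\xx\ \ge\ \int_\Omega\mathcal V_0\!\left(\xx,\mathbb E(\vv)-\tfrac12\mathbf W_0^2\right)d\xx.
\]
Since $\vv_j-\mathbb P\vv_j$ converges weakly in $H^1$ (up to subsequence) to some $\w$ with $\mathbb E(\w)=\mathbb E(\vv)$, so that $\w-\vv\in\mathcal R$ and \eqref{globalequi} yields $\mathcal L(\vv_j)\to\mathcal L(\vv)$, taking the minimum over skew matrices in \eqref{DTfunc} delivers $\liminf_j\mathcal F_{h_j}(\vv_j)\ge\mathcal F(\vv)$.

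For (ii), first suppose $\vv\in W^{1,\infty}(\Omega;\R^N)$ and let $\mathbf W^*\in\M^{N\times N}_{skew}$ realize the minimum in \eqref{DTfunc}. Define $\vv_j:=\vv+h_j^{-1/2}\mathbf W^*\xx$. Since $\mathbf W^*$ is skew, $\mathbb E(\vv_j)=\mathbb E(\vv)$, so the strain convergence is immediate; since $\mathbf W^*\xx\in\mathcal R$, \eqref{globalequi} gives $\mathcal L(\vv_j)=\mathcal L(\vv)$. A direct expansion yields
\[
\mathbb E(\vv_j)+\tfrac{h_j}{2}\nabla\vv_j^T\nabla\vv_j=\mathbb E(\vv)-\tfrac12\mathbf W^{*2}+\sqrt{h_j}\,{\rm sym}(\mathbf W^{*T}\nabla\vv)+\tfrac{h_j}{2}\nabla\vv^T\nabla\vv,
\]
uniformly bounded in $L^\infty$, so for small $h_j$ the argument of $\mathcal V$ lies in $\mathcal O$; Taylor expansion combined with \eqref{Velliptic} and dominated convergence give $\int_\Omega\mathcal V_{h_j}(\xx,\nabla\vv_j)\,d\xx\to\int_\Omega\mathcal V_0(\xx,\mathbb E(\vv)-\tfrac12\mathbf W^{*2})\,d\xx$, whence $\mathcal F_{h_j}(\vv_j)\to\mathcal F(\vv)$. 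For general $\vv\in H^1(\Omega;\R^N)$ approximate by $\vv^\varepsilon\in W^{1,\infty}$ with $\vv^\varepsilon\to\vv$ in $H^1$: the ellipticity \eqref{Z3} keeps the optimal $\mathbf W^*(\vv^\varepsilon)$ bounded, continuity of the quadratic form and of $\mathcal L$ give $\mathcal F(\vv^\varepsilon)\to\mathcal F(\vv)$, and a diagonal argument supplies the required recovery sequence.

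The main obstacle is the liminf step: although $\sqrt{h_j}\nabla\vv_j$ converges strongly in $L^2$, this yields only $L^1$ (not $L^2$) control on $\mathbf B_j$, so the passage of weak lower semicontinuity through the Taylor remainder must be executed via an equi-integrability/truncation argument rather than plain weak $L^2$ compactness; identifying the constant skew limit $\mathbf W_0$ from $\mathbf R_j$ via the precise $O(\sqrt{h_j})$ rate on $\vartheta_j$ is the other delicate point.
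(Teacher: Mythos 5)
Your argument is correct and, for the core of part (i) — boundedness of the energy, the polar-decomposition/rigidity step, the Euler--Rodrigues representation of $\mathbf R_j$, the rates $1-\cos\vartheta_j=O(h_j)$, $\sin\vartheta_j=O(\sqrt{h_j})$, the strong $L^2$ convergence $\sqrt{h_j}\,\nabla\vv_j\to\mathbf W_0$ and the weak $L^1$ identification of the limit of $\mathbf B_j=\mathbb E(\vv_j)+\tfrac{h_j}{2}\nabla\vv_j^T\nabla\vv_j$ — it coincides with the paper's proof. The two places where you genuinely diverge are both legitimate. For the lower-semicontinuity step the paper invokes Lemmas 4.2--4.3 of \cite{DMPN}, which supply convex Carath\'eodory minorants $\mathcal V^k_j\le\mathcal V_{h_j}$ converging to $(1-1/k)\mathcal V_0$ and are designed to work with mere weak $L^1$ convergence of $\mathbf B_j$ (this is also the only reason the paper restricts to decreasing $h_j$); you instead do a hand-made truncation on $\{h_j|\mathbf B_j|\le\varepsilon\}$ plus Taylor plus weak lower semicontinuity of the quadratic form, which avoids the monotonicity assumption but silently needs an $L^2$ bound on $\mathbf B_j$: your closing remark that one has ``only $L^1$ (not $L^2$) control on $\mathbf B_j$'' is both inaccurate and, if it were true, fatal to your own scheme, since on the good set $|\mathbf B_j|\le\varepsilon/h_j$ and the Taylor remainder $\omega(\varepsilon)\int|\mathbf B_j|^2$ would then be uncontrolled. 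Fortunately \eqref{(2.18)} gives $\int_\Omega\mathcal V_{h_j}(\xx,\nabla\vv_j)\,d\xx\ge 4C\int_\Omega|\mathbf B_j|^2\,d\xx$, so $\mathbf B_j$ is in fact bounded in $L^2$, $\chi_{\{h_j|\mathbf B_j|>\varepsilon\}}\mathbf B_j\to\mathbf 0$ in $L^1$, and your truncation closes; you should state this bound explicitly. For part (ii) the paper builds one recovery sequence for a general $\vv\in H^1$ by mollifying at a scale $\eps_j$ calibrated so that $h_j\eps_j^{-3}\to 0$, whereas you first treat $\vv\in W^{1,\infty}$ (where $\vv_j=\vv+h_j^{-1/2}\mathbf W^*\xx$ works by dominated convergence) and then diagonalize over a Lipschitz approximation; this is fine because $\mathbb E(\vv_j^\varepsilon)=\mathbb E(\vv^\varepsilon)$ converges strongly, $\mathcal L(\vv_j^\varepsilon)=\mathcal L(\vv^\varepsilon)$ by \eqref{globalequi}, and $\mathcal F$ is continuous along strongly $H^1$-convergent sequences since the minimizing $\mathbf W^*(\vv^\varepsilon)$ stays in a compact set by \eqref{Z3}. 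Net effect: your route is marginally more self-contained (no appeal to the DMPN approximation lemmas, no monotonicity of $h_j$), the paper's is shorter where it can cite existing machinery.
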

\begin{proof}
First we prove i).
We set $\yy_j=\x+h \v_j$ and denote various positive constants by $C, C',C'',...,L',L''$.
We may assume without restriction that $\mathcal F_{h_j}(\v_j)\leq C$;
 by taking into account \eqref{coerc} we get
$$Ch^{-2}\int_\Omega \vert\nabla\yy_j^T\nabla\yy_j- \mathbf I\vert^2\,d\x-
\mathcal L(\v_j) \ \le \
\mathcal F_{h_{j}}(\v_j)
$$
and by \eqref{globalequi}
$$h^{-2}\int_\Omega \vert\nabla\yy_j^T\nabla\yy_j- \mathbf I\vert^2\,d\x \ \leq \ C'\,+\,
\mathcal L(\v_j)\ =\ C'+\mathcal L(\v_j-\mathbb P\v_j),$$
where $\mathbb P\,\v_j$ is the projection of $\v_j$ onto the set of infinitesimal rigid displacements.
\\
Hence by Korn-Poincar\'{e} inequality we have
\begin{equation}
\label{bound}
h^{-2}\int_\Omega \vert\nabla\yy_j^T\nabla\yy_j- \mathbf I\vert^2\,d\x\ \leq\ C'\,+\,
C''\left(\int_{\Omega}\vert\mathbb E(\v_j)\vert^2\,d\xx\right)^{\frac{1}{2}}\leq\ C'''.
\end{equation}
Inequality \eqref{bound} together with the Rigidity Lemma of \cite{FJM}
imply, by the same computations at the beginning of proof of Lemma \ref{infFh},
that for every $h_{j}$ there exists a constant rotation $\mathbf R_j\in SO(N)$
and a constant $C'''$, dependent only on $\Omega$,
such that
$$\int_\Omega\vert\nabla\yy_j-\mathbf R_j\vert^2\,d\x\leq C''''h_{j}^2$$
that is
\begin{equation}
\label{bound1}
\int_\Omega\vert\mathbf I+h\nabla\v_j-\mathbf R_j\vert^2\,d\x\leq C''''h_{j}^2.
\end{equation}
Due to the representation \eqref{eurod} of rotations
for every $j\in \mathbb N$ there exist $\vartheta_j\in \mathbb R$ and
$\mathbf W_j\!\in \!\M^{N\times N}_{skew},\
\vert\mathbf W_{h_{j}}\vert^{2}=2$ such that
$\mathbf R_j=\exp(\vartheta_j\mathbf W_j)$ 
and 
\begin{equation}\lab{euform}
 \mathbf R_j\,=\,\exp(\vartheta_j \mathbf W_j)\,=\,
\mathbf I\,+\,\sin\vartheta_j\,\mathbf W_j\, +\,(1-\cos\vartheta_j)\,\mathbf W_j^{2}
\end{equation}
hence by \eqref{bound1}
\begin{equation}
\label{bound2}
\int_\Omega\vert h_j\nabla\v_j-\sin\vartheta_j \mathbf W_j-(1-\cos\vartheta_j)\mathbf W_j^2\vert^2\,d\x\leq C''''h_{j}^2\,.
\end{equation}
Since
$${\rm sym} \Big(h_j\nabla \v_j-\sin\vartheta_j \mathbf W_j
-(1-\cos\vartheta_j)\mathbf W_j^2\Big)=h_{j}\mathbb E(\v_j)-(1-\cos\vartheta_j)\mathbf W_j^2$$
 we get
$$\int_\Omega\vert \mathbb E(\v_j)-(1-\cos\vartheta_j)h_{j}^{-1}\mathbf W_j^2\vert^{2}\,d\x
\leq
{C''''}
.$$
By recalling that $\mathbb E(\v_j)\wconv\mathbb E(\v)$ in $L^2(\Omega;\M^{N\times N})$,
we deduce for suitable $L>0$
\begin{equation}
\label{bound3}
\left | 1-\cos\vartheta_j\right |=\frac{1}{2}\left |(1-\cos\vartheta_j)\mathbf W_j^2\right | \leq L {h_{j}}
\end{equation}
that is
\begin{equation}
\label{bound4}
\vert\sin \vartheta_j\vert\leq \sqrt{2Lh_{j}}.
\end{equation}
By \eqref{bound2}, \eqref{bound3} and \eqref{bound4} we obtain
\begin{eqnarray}
\label{convskewint}
\frac 1 2
\int_\Omega \!\vert && \!\!\!\!\!\!\!\!\!\!\!\!\!\!\!
\sqrt{h_{j}} \nabla\v_j|^2d\xx \leq \\
\nonumber
\,&\leq&
\int_\Omega \!\vert \sqrt{h_{j}}\nabla\v_j-h_{j}^{-1/2}\sin\vartheta_j \mathbf W_j
\vert^2d\x
+
\int_\Omega \!\vert h_{j}^{-1/2}\sin\vartheta_j \mathbf W_j
\vert^2d\x \leq \\
\nonumber
&\leq& (C'''+2L|\om|)\,h_{j} \ ,
\end{eqnarray}
hence, up to subsequences, by \eqref{bound4} there exists a constant matrix
$\mathbf W\in \M^{N\times N}_{skew}$ such that
\begin{equation}\label{proofconvsq}
  \sqrt{h_{j}}\nabla\v_j\ \rightarrow\ \mathbf W \qquad \hbox{strongly in } L^2(\Omega;\M^{N \times N})
\end{equation}
and therefore
\begin{equation}\label{proofconvhNablTNabl}
h_{j}\nabla\v_j^T\nabla\v_j\ \rightarrow\ \mathbf W^T\mathbf W\ =\ -\,\mathbf W^{2} \qquad
\hbox{strongly in } L^1(\Omega;\M^{N \times N})\,.
\end{equation}
By Lemmas 4.2 and 4.3 of \cite{DMPN}
for every $k \in \N$ there exist
an increasing sequence of Caratheodory functions
$\mathcal V^k_j : \om \times \M^{N \times N}_{sym} \to [0, +\infty)$
and a measurable function $\mu^k:\om \to (0,+\infty)$
such that $\mathcal V^k_j(\xx,\cdot)$ is convex for a.e. $\xx \in \om$ and satisfies
\beeq \lab{riscal}
	\mathcal V^k_j (\xx,\mathbf B) \ \leq\  \mathcal V(\xx,h_j \mathbf B) / h_j^2 \ =\
           \mathcal V_{h_j}(\xx,h_j \mathbf B)
	\qquad \forall\, \mathbf B \in
\M^{N \times N}_{sym} \, ,
\eneq
\beeq \lab{identica}
	\mathcal V_j^k (\xx,\mathbf B) \  = \ \Big( 1-\frac{1}{k} \Big) \, \mathcal V_0(\xx,\mathbf B)
	\qquad \mbox{for $\mathcal V_0(\xx,\mathbf B)\leq \mu^k(\xx) / h_j^{2}$ }.
\eneq
Property (\ref{identica}) entails
\beeq \lab{Vjconv}
	\lim_{j \to +\infty} \mathcal V^k_j (\xx, \mathbf B) =
	\Big( 1-\frac{1}{k} \Big) \,\mathcal V_0( \xx,\mathbf B) \,
            \qquad \hbox{a.e. }\xx\in\Omega,\  \forall\,\mathbf B \in \M^{N \times N}_{sym}
\eneq
then, by exploiting (\ref{Vjconv}), Lemma 4.3 of \cite{DMPN},
and taking into account that  \vskip0.2cm
\centerline{$\mathbf B_j:=\mathbb E(\v_j) + \textstyle{\frac12} h_{j} \nabla \v_{h_j}^T
	\nabla \v_{h_j}\wconv \mathbb E(\v)- \textstyle{\frac12}\mathbf W^{2}$ in $L^1(\Omega;\M^{N \times N})$\,,}
by \eqref{riscal} and \eqref{Vjconv} we get
\beea
	\liminf_{j \to 0} \int_{\om} \mathcal V_{h_j}
	(x,\nabla \v_{h_j}) \,d\xx
	\!\!& \geq &\!\!
	\liminf_{j \to +\infty} \int_{\om} \mathcal V_j^k
	(x, \mathbf B_j)\,d\xx
	\nonumber \\
	\!\!& \geq &\!\!
	\int_{\om} \!(1- 1/k) \mathcal V_0
\Big( \xx,\,\mathbb E(\v)- \textstyle{\frac12}\mathbf W^{2}\Big) \,d\xx \quad \forall k \in \N\,.
	\, \nonumber
\enea
Up to subsequences $\v_j-\mathbb P\v_j\wconv \w$ in $H^1(\Omega;\R^N)$, moreover $\mathbb E(\v)=\mathbb E(\w)$. Then by \eqref{globalequi} for every $k \in \N$ we obtain
 \beeq\begin{array}{ll}
 \displaystyle\liminf_{j\rightarrow +\infty} \mathcal F_{h_{j}}(\v_j)
 &\displaystyle\geq\ \int_{\om} \Big(1-\frac1k \Big) \mathcal V_0(\xx, \mathbb E(\v)- \textstyle{\frac12}\mathbf W^{2}) \,d\xx-\mathcal L(\w)=
 \vspace{0.1cm}
 \\
&=\ \displaystyle\int_{\om} \Big(1-\frac1k \Big) \mathcal V_0( \xx, \mathbb E(\v)- \textstyle{{\frac12}\mathbf W^{2}}) \,d\xx-\mathcal L(\v)\nonumber \\
\end{array}
\eneq
Taking the supremum as $k \to \infty$
we deduce
\beeq \label{liminf}
\displaystyle\liminf_{j\rightarrow +\infty} \mathcal F_{h_{j}}(\v_j)\geq \int_{\om}
\mathcal V_0( \xx, \mathbb E(\v)- \textstyle{{\frac12}\mathbf W^{2}}) \,d\xx-\mathcal L(\v)
\geq \mathcal F (\vv)
\eneq
which proves $i)$.
\vskip0.1cm
We are left to prove claim ii).
To this aim,
we set for every $\v\in H^{1}(\om;\mathbb R^{N})$:
\begin{equation}\label{Wv}
\displaystyle\mathbf W_{\v}\in \argmin \left\{\int_{\om}\mathcal V_0\Big( \xx,\, \mathbb E(\v)-\textstyle\frac{1}{2}\mathbf W^{2}\Big)\, d\xx: \ \mathbf W\in \M^{N\times N}_{skew}\right\}\,.
\end{equation}
Without relabeling, $\v$ denotes also a fixed compactly supported  extension in $H^{1}(\mathbb R^{N};\mathbb R^{N})$ of the given $\v$
(such extension exists since $\om $ is Lipschitz due to \eqref{OMEGA}).\\
We may define a recovery sequence $\w_{j}\in C^1(\overline\om;\mathbb R^{N})$ for every $j$, as follows. Set
\begin{equation}
\w_{j}=h_{j}^{-1/2}\,\mathbf W_{\v}\,\x+\v\star\varphi_{j}
\end{equation}
where the sequence $\eps_{j}$ is chosen in such a way that 
$h_{j}\eps_{j}^{-3}\to 0$ holds true and
$\varphi_{j}(\xx)=\varepsilon_j^{-N}\varphi(\xx/\varepsilon_j )$ is a 
mollifier supported in $B_{\eps_{j}}(\mathbf 0)$.
Sobolev embedding
entails $\v\in L^{6}(\R^N;\mathbb R^{N})$, since $\v\in H^{1}(\R^N;\mathbb R^{N})$ and $N=2,3$; then by Young Theorem and $0<\varepsilon_j\leq 1$ we have
\begin{equation}\lab{young}
\|\nabla(\v\star\varphi_{j})\|_{L^{\infty}}\le 
\|v\|_{L^{6}}\|\nabla\varphi_{j}\|_{L^{6/5}}\le \eps_{j}^{-N/6\,-1}\|\nabla\varphi\|_{L^{6/5}}\|v\|_{L^{6}}
\le \eps_{j}^{-3/2}\|\nabla\varphi\|_{L^{6/5}}\|v\|_{L^{6}}\,.
\end{equation}
By $\,\nabla \w_j= h^{-1/2}\mathbf W_{\v}+\nabla (\v\star\varphi_j)\,$ and $\,\mathbf W_{\v}^T=-\mathbf W_{\v}\,$ we get
 \begin{equation*}\begin{array} {ll}
 &\hskip-0.2cm \mathbb E(\w_{j})\,=\,\mathbb E(\v)\star\varphi_{j}\,,\vspace{0.2cm}\\
 &\hskip-0.2cm h_{j}\nabla\w_{j}^{T}\nabla \w_{j}\,=\,
-\mathbf W_{\v}^2\,+\, h_{j}\nabla(\v\star\varphi_{j})^{T}\,\nabla(\v\star\varphi_{j})\,+\,
h_{j}^{1/2}\big(\nabla(\v\star\varphi_{j})^{T}\,\mathbf W_{\v}-\mathbf W _{\v}\nabla(\v\star\varphi_{j})\big)\,,\\
\end{array}\end{equation*}
hence, by taking into account \eqref{young} and
$h_{j}\eps_{j}^{-3}\to 0$, we get
\beeq\lab{L2}\mathbb E(\w_{j})+\textstyle\frac{1}{2}h_{j}\nabla\w_{j}^{T}\nabla \w_{j}\to \mathbb E(\v)- \textstyle\frac{1}{2}\mathbf W_{\v}^2\quad \hbox{in} \ \ L^{2}(\om,\mathbb R^{N})\,,
\eneq
\beeq\lab{Linfty}
h_{j}\Big(\mathbb E(\w_{j})+\textstyle\frac{1}{2}h_{j}\nabla\w_{j}^{T}\nabla \w_{j}\Big) \ \to \ \mathbf 0\quad \hbox{in} \ \ L^{\infty}(\om,\mathbb R^{N})\,.
\eneq
Therefore Taylor's expansion of $\mathcal V$ entails
\begin{equation}
\lim_{j\to +\infty}\mathcal V_{h_{j}}(\xx, \mathbb E(\w_{j})+\textstyle\frac{1}{2}h_{j}\nabla\w_{j}^{T}\nabla \w_{j})=\mathcal V_0\left(\xx,\mathbb E(\v)-\textstyle{\frac 12}\mathbf W^2_{\v}\right)
\qquad \hbox{for a.e. }\xx\in \om,
\end{equation}and taking into account \eqref{upbound},\,\eqref{Wh},\,\eqref{Vh},\,\eqref{Linfty} we have
\beeq
\mathcal V_{h_{j}}(\xx, \mathbb E(\w_{j})+\textstyle\frac{1}{2}h_{j}\nabla\w_{j}^{T}\nabla \w_{j})\le C|\mathbb E(\w_{j})+\textstyle\frac{1}{2}h_{j}\nabla\w_{j}^{T}\nabla \w_{j}|^{2}\,,
\eneq
hence the Lebesgue dominated convergence theorem yields
$$ \mathcal F_{h_{j}}(\w_{j})\rightarrow\min_{\mathbf W\in \M^{N\times N}_{skew}}
\int_\Omega \mathcal V_0\left(\xx,\mathbb E(\v)-\textstyle{\frac 12}\mathbf W^2_{\v}\right)\,d\xx-
\mathcal L(\v) \,, $$
thus proving ii).
\end{proof}
\begin{remark}\rm{If $\mathcal W$ is a convex function of $\mathbf F^{T}\mathbf F-\mathbf I$ then \eqref{liminf} is a straightforward consequence of weak $L^1(\om;\M^{N\times N})$
convergence of $\mathbf{B}_j$,
hence introduction and use of integrands $\mathcal V^k_j$ in the proof is unnecessary in such case.
So restriction to decreasing sequences $h_j$
(used to apply Lemmas 4.2, 4.3 of \cite{DMPN})
can be deleted in the assumptions of Lemma \ref{convfunc} if $\mathcal W$ is convex.}
\end{remark}
%
\vskip0.2cm
Lemma \ref{infFh} entails existence of minimizing sequences for the sequence of functionals $\F_h$.
Next Proposition entails a (very weak) relative compactness property of these sequences;
in its proof we will consider the cone $ \mathbb K = \{ \tau(\Rot-\Id):\ \tau>0,\:\Rot\in SO(N)\} $
which fulfils,
thanks to \eqref{cone} and \eqref{clcone}:
\begin{equation}\lab{Kappa}
\overline{\mathbb K}\,=\,\mathbb K\cup \M^{N\times N}_{skew}.
\end{equation}
\begin{equation}\lab{Kappabis}
\overline{\mathbb K}+\M^{N\times N}_{skew}\,=\, {\mathbb K}+\M^{N\times N}_{skew} \,=\,
\left\{\mathbf W+\mathbf Z^2:\ \mathbf W,\,\mathbf Z\,\in\M^{N\times N}_{skew}\,\right\}\,.
\end{equation}
\begin{lemma}(\textbf{Compactness of minimizing sequences})
\label{compact}
 Assume that \eqref{globalequi}, \eqref{comp} hold true,
 $h_{j}\to 0$ is a sequence of strictly positive real numbers 
 and the sequence of displacements $\v_{{j}}\in H^1(\Omega;\R^N)$ fulfil $(\mathcal F_{h_{j}}(\v_{j})-\inf\mathcal F_{h_j})\to 0$. \\ Then there exists $C>0$ such that $\|\mathbb E(\v_{j})\|_{L^{2}}\le C$.
\end{lemma}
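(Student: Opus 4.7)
Since $\mathcal F_{h_j}(\mathbf 0)=0$ we have $\inf\mathcal F_{h_j}\le 0$, and Lemma \ref{infFh} gives a uniform lower bound on $\inf\mathcal F_{h_j}$. Thus the minimizing assumption yields a constant $M$ with $\mathcal F_{h_j}(\v_j)\le M$ for all $j$. Setting $\yy_j=\xx+h_j\v_j$ and arguing as at the start of the proof of Lemma \ref{infFh} (i.e.\ combining the coercivity \eqref{coerc}, the polar decomposition \eqref{polardec}, and the Geometric Rigidity Inequality \eqref{muller}), I would produce constant rotations $\mathbf R_j\in SO(N)$ with
\begin{equation*}
\frac{C}{C_G}\,h_j^{-2}\!\int_\Omega|\nabla\yy_j-\mathbf R_j|^2\,d\xx\ \le\ \int_\Omega\mathcal V_{h_j}(\xx,\nabla\v_j)\,d\xx\ =\ \mathcal F_{h_j}(\v_j)+\mathcal L(\v_j)\ \le\ M+\mathcal L(\v_j).
\end{equation*}
By Euler--Rodrigues \eqref{eurod} write $\mathbf R_j=\mathbf I+\sin\vartheta_j\,\mathbf W_j+(1-\cos\vartheta_j)\mathbf W_j^2$ with $\mathbf W_j\in\M^{N\times N}_{skew}$, $|\mathbf W_j|^2=2$.

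\textbf{Decomposition.} Define $\w_j:=h_j^{-1}\bigl(\yy_j-\mathbf R_j\xx-\ccc_j\bigr)$ where $\ccc_j$ is chosen so that $\int_\Omega\w_j\,d\xx=\mathbf 0$, and $\zz_j(\xx):=h_j^{-1}(\mathbf R_j-\mathbf I)\xx$. Then $\v_j=\w_j+\zz_j+h_j^{-1}\ccc_j$, the last term being a translation (hence $\mathcal L(h_j^{-1}\ccc_j)=0$ by \eqref{globalequi}), and
\begin{equation*}
\|\nabla\w_j\|_{L^2}^2=h_j^{-2}\!\int_\Omega|\nabla\yy_j-\mathbf R_j|^2\,d\xx,\qquad
\zz_j=h_j^{-1}\sin\vartheta_j\,\mathbf W_j\xx+h_j^{-1}(1-\cos\vartheta_j)\mathbf W_j^2\xx.
\end{equation*}
The first summand of $\zz_j$ is an infinitesimal rigid displacement, so \eqref{globalequi} gives $\mathcal L(\zz_j)=h_j^{-1}(1-\cos\vartheta_j)\mathcal L(\mathbf W_j^2\xx)$.

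\textbf{Exploiting the compatibility condition.} Because $\mathbf W\mapsto\mathcal L(\mathbf W^2\xx)$ is continuous on the compact set $\{\mathbf W\in\M^{N\times N}_{skew}:|\mathbf W|^2=2\}$ and strictly negative on it by \eqref{comp}, there exists $\delta>0$ (independent of $j$) with $\mathcal L(\mathbf W_j^2\xx)\le-\delta$. Since $1-\cos\vartheta_j\ge 0$, we obtain $\mathcal L(\zz_j)\le-\delta\,h_j^{-1}(1-\cos\vartheta_j)\le 0$. Meanwhile the Poincar\'e inequality \eqref{poi} applied to $\w_j$ (which has zero mean) gives $|\mathcal L(\w_j)|\le C_*\|\nabla\w_j\|_{L^2}$ for a constant $C_*$ depending only on $\|\mathbf f\|_{L^2},\|\mathbf g\|_{L^2},\Omega$.

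\textbf{Conclusion.} Combining the three estimates,
\begin{equation*}
\frac{C}{C_G}\|\nabla\w_j\|_{L^2}^2+\delta\,h_j^{-1}(1-\cos\vartheta_j)\ \le\ M+C_*\|\nabla\w_j\|_{L^2},
\end{equation*}
which forces $\|\nabla\w_j\|_{L^2}\le K_1$ and $h_j^{-1}(1-\cos\vartheta_j)\le K_2$ for constants independent of $j$. Taking symmetric parts of $\nabla\v_j=\nabla\w_j+h_j^{-1}(\mathbf R_j-\mathbf I)$ and using that $\mathbf W_j^2$ is already symmetric yields $\mathbb E(\v_j)=\mathbb E(\w_j)+h_j^{-1}(1-\cos\vartheta_j)\mathbf W_j^2$, whence
\begin{equation*}
\|\mathbb E(\v_j)\|_{L^2}\ \le\ \|\nabla\w_j\|_{L^2}+2\,|\Omega|^{1/2}\,h_j^{-1}(1-\cos\vartheta_j)\ \le\ K_1+2K_2|\Omega|^{1/2}.
\end{equation*}

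\textbf{Main obstacle.} The delicate point is isolating the ``offending'' rotational scale $h_j^{-1}(1-\cos\vartheta_j)$ — which a priori could blow up — and showing that the compatibility condition \eqref{comp} produces a \emph{negative} contribution of order $h_j^{-1}(1-\cos\vartheta_j)$ to $\mathcal L(\v_j)$, strong enough to absorb the quadratic rigidity term and force this scale to stay bounded. Without \eqref{comp} (e.g.\ compressive normal tractions as in Remark \ref{unbound}) the sign flips and compactness collapses, consistent with the unboundedness examples.
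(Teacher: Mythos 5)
Your proof is correct, but it takes a genuinely different route from the paper. The paper argues by contradiction: it assumes $t_j:=\|\mathbb E(\v_j)\|_{L^2}\to+\infty$, normalizes $\w_j=t_j^{-1}\v_j$, and runs a three-case analysis on the behaviour of $h_jt_j$ (converging to a positive limit, to $0$, or to $+\infty$) to show that $\nabla\w_j$ converges strongly to an element of the cone $\overline{\mathbb K}+\M^{N\times N}_{skew}$ from \eqref{Kappa}--\eqref{Kappabis}; the sign information $\mathcal L(\w)\ge 0$ extracted from $\limsup t_j^{-1}\F_{h_j}(\v_j)\le 0$ then contradicts \eqref{comp} unless $\mathbb E(\w)=\mathbf 0$, which in turn contradicts $\|\mathbb E(\w_j)\|_{L^2}=1$. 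You instead give a direct, quantitative argument: you split $\v_j$ into a zero-mean remainder $\w_j$, the rigid part of the rotation, and the symmetric Euler--Rodrigues correction, observe via compactness of the unit sphere in $\M^{N\times N}_{skew}$ that \eqref{comp} yields a uniform $\delta>0$ with $\mathcal L(\mathbf W_j^2\xx)\le-\delta$, and absorb $\mathcal L(\v_j)$ into the quadratic rigidity term, obtaining explicit bounds on both $\|\nabla\w_j\|_{L^2}$ and the rotational scale $h_j^{-1}(1-\cos\vartheta_j)$. This is cleaner and arguably stronger: it produces an explicit constant and, as a by-product, essentially the ingredients for \eqref{convsqrth}, whereas the paper's cone-based contradiction gives only the qualitative bound (though its geometric framework is reused elsewhere). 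One small point you share with the paper's own Lemma \ref{infFh}: bounding the boundary term $\int_{\partial\Omega}\mathbf f\cdot\w_j\,d\H^{N-1}$ by $C_*\|\nabla\w_j\|_{L^2}$ requires a Poincar\'e--trace inequality on the Lipschitz domain, not just \eqref{poi}; this is standard and implicit in the paper as well, but worth stating.
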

\begin{proof}
By Lemma \ref{infFh} there exists $c$ such that
\begin{equation}\label{estimates}
-\infty \,<\,c\,\leq\,\inf\mathcal F_{h_j}\,\leq\, \mathcal F_{h_j}(\mathbf 0)\,=\, 0.
\end{equation}
Assume by contradiction that $t_{j}:=\parallel \mathbb E(\v_{{j}})\parallel_{L^2}\rightarrow +\infty$ and set $\w_{j}=t_{j}^{-1}\v_{j}$. It is readily seen that by Lemma \ref{corcurl} there exists $\w\in H^1(\Omega;\R^N)$ and a subsequence such that without relabeling $\mathbb E(\w_j)\wconv\mathbb E(\w)$ in $L^2(\Omega;\M^{N\times N})$. \\
By \eqref{estimates} we can assume up to subsequences that $\F_{h_{j}}(\v_{{j}})\leq h_j^2$. \\ 
Moreover
by setting $\yy_{{j}}=\x+h_j\v_{{j}}=\x+h_{j}t_{j}\w_{j}$, arguing as at the beginning of Lemma~\ref{infFh} 
proof and exploiting Korn-Poincar\'e inequality \eqref{kornpoi}, we obtain that
for every $j\in\mathbb N$ there exists a constant rotation $\Rot_{j}\in SO(N)$ such that
$$\int_\Omega\vert\nabla\yy_{{j}}-\Rot_j\vert^2\,d\x\,\leq\, h_{j}^2\,+\, C_K(\|f\|_{L^2(\partial\om)}+\|g\|_{L^2(\om)}) \,t_j\,h_j^2,$$
that is, by setting $C\,'=C_K(\|f\|_{L^2(\partial\om)}+\|g\|_{L^2(\om)})$,
\begin{equation}
\label{conda}
\int_\Omega\vert \Id+h_jt_{j}\nabla\w_j-\Rot_j\vert^2\,d\x\,\leq\, h_{j}^2\,(1+C\,'\,t_j).
\end{equation}
By possible further extraction of subsequences one among the three alternatives take place:
\begin{equation*}
a)\ h_jt_{j}\rightarrow \lambda>0\,, \qquad b)\ h_jt_{j}\rightarrow 0\,, \qquad c)\ h_jt_{j}\rightarrow +\infty\,.
\end{equation*}
If condition $a)$ holds true we have
$$\int_\Omega\left |\nabla\w_j-\frac{\Rot_j-\Id}{h_jt_{j}}\right |^2\,d\x\,\leq\,
\frac{1}{t_j^{2}}\,+\,\frac {C\,'}{t_j}\,,$$
hence, up to subsequences,
\begin{equation}\lab{conva}
\nabla \w_j\rightarrow \frac{\Rot-\Id}{\lambda}
\end{equation}
strongly in $L^2(\Omega; \M^{N\times N})$ for a suitable constant matrix $\Rot\in SO(N)$ and by Lemma \ref{corcurl} we get $\nabla\w\in \mathbb K+\M^{N\times N}_{skew}$.
\vskip0.2cm
If condition $b)$ holds true, then by using formula \eqref{euform} and  \eqref{conda} there exist  $\vartheta_{h_j}\in[0,2\pi]$ and  a constant $\mathbf W_{j}\in \M^{N\times N}_{skew}$ with $\vert \mathbf W_{j}\vert^2=\vert \mathbf W_{j}^2\vert^2=2$
 such that
\begin{equation}
\label{condb}
\int_\Omega\vert h_jt_{j}\nabla\w_{{j}}-
\sin \vartheta_{h_j}\mathbf W_{j}-(1-\cos\vartheta_{j})\mathbf W_{j}^2\vert^2\,d\x\leq \,h_{j}^2\,(1+C\,'\,t_j)\,.
\end{equation}
Since $\mathbf W_{j}$ and $\mathbf W_{j}^2$ aree respectively skew-symmetric
and symmmetric, \eqref{condb} yields
\begin{equation}
\label{condb1}
\int_\Omega\left | \mathbb E(\w_j)-
\frac{(1-\cos\vartheta_{j})}{h_jt_{j}}\mathbf W_{j}^2\right |^2\,d\x\,\leq \,t_j^{-2} \,+\, C\,'\,t_j^{-1}
\end{equation}
and bearing in mind  that $\int_\Omega\vert \mathbb E(\w_j)\vert^2\,d\x\,=\,1\,$
we get
\beeq\lab{incos}
\left| \frac{(1-\cos\vartheta_{j})}{h_jt_{j}}\right|
\,=\, \frac 1 2 \,
\displaystyle\left| \frac {(1-\cos\vartheta_{j})} {h_jt_{j}} \mathbf W_{j}^2\right|
\,\leq\, C\,'',\eneq
hence
\beeq\lab{insen}
\left |\sin\vartheta_{h_j}\right |
\,\leq\,
\sqrt{2(1-\cos\vartheta_{h_j})}
\,\leq\,\sqrt{2\,C\,''\,h_j\, t_{j}}
\eneq
and by \eqref{condb}
\begin{equation}
\label{condb2}
\int_\Omega\left | \sqrt{h_jt_{j}}\nabla\w_j-\frac{\sin\vartheta_{j}}{\sqrt{h_jt_{j}}}\mathbf W_{j}-
\frac{(1-\cos\vartheta_{j})}{\sqrt{h_jt_{j}}}\mathbf W_{j}^{2}\right |^2\,d\x\,\leq\, C\,'''\,h_{j}\,.
\end{equation}
By \eqref{incos} we know that $1-\cos\vartheta_{j}=o(\sqrt{h_jt_{j}}) $, hence \eqref{insen},\eqref{condb2} entail, up to subsequences,
$\sqrt{h_jt_{j}}\nabla\w_j\rightarrow \mathbf W  \in\M^{N\times N}_{skew}$ strongly in
$L^2(\Omega;\M^{N\times N})$ and since by \eqref{coerc} and Poincar\'{e}-Korn inequality
%
\begin{eqnarray*}
t_j\int_\Omega\vert \mathbb E(\w_{j})+\textstyle\frac{1}{2}h_{j}t_{j}\nabla\w_j^T\nabla\w_j\vert^2\,d\xx
&\leq &
 C^{IV}+
\mathcal L(\w_j) \\
&=&
C^{IV}+\mathcal L(\w_j-\mathbb P\w_{j})\\
&\leq&
 C^{V}\left(\int_\Omega\vert \mathbb E(\w_j)\vert^2\,d\x\right)^{\frac{1}{2}},
\end{eqnarray*}
we deduce
$$\int_\Omega\vert \mathbb E(\w_j)+\textstyle\frac{1}{2}h_{j}t_j\nabla\w_j^T\nabla\w_{j}\vert^2\,d\xx\rightarrow 0.$$

On the other hand, since
$$\liminf_{j\rightarrow +\infty}\int_\Omega\vert 2\mathbb E(\w_j)+h_{j}t_j\nabla\w_j^T\nabla\w_j\vert^2\,d\x \geq \int_\Omega\vert 2\mathbb E(\v)-\mathbf W^2\vert\,d\x,$$
 we get $2\mathbb E(\w)=\mathbf W^2$ which implies
$\nabla\w={\rm skew}\nabla\w+\frac{1}{2}\mathbf W^2$,
hence ${\rm skew}\nabla\w$ is a gradient field, that is a constant skew-symmetric matrix. By taking
$$\Rot:=\Id+\frac{1}{2}\mathbf W^2+\frac{\sqrt 3}{2}\mathbf W$$
and by applying formula \eqref{euform} we get $\Rot \in SO(N)$
that is $\nabla\w-(\Rot-\Id)\in \M^{N\times N}_{skew}$
which implies $\nabla\w\in\mathbb K+ \M^{N\times N}_{skew}$ whenever condition $b)$ holds.
\vskip0.2cm
Eventually if condition $c)$ holds true, by  \eqref{conda}  we get
\beeq\lab{convc}
\int_{\om}\left |\nabla\w_j-\frac{\Rot_j-\Id}{h_{j}t_j}\right |^2\,d\x\,\leq\,
\frac{1}{t_j^{2}}\,+\,\frac {C\,'}{t_j}\,,
\eneq
and by taking into account that
$|\Rot_j-\Id|= o(h_{j}t_j)$ due to \eqref{conda}, we have $\nabla\w_{j}\rightarrow \mathbf 0$ strongly in $L^2(\Omega; M^{N\times N})$, hence  $\nabla\w\in \overline{\mathbb K}+\M^{N\times N}_{skew}$ still by Lemma \ref{corcurl}.
\vskip0.3cm
By summarizing, in all three cases if $t_{j}:=\parallel \mathbb E(\v_{{j}})\parallel_{L^2}\rightarrow +\infty$  and $\mathcal F_{h_{j}}(t_j\v_{{j}})\leq C$ then $\nabla(t_{j}^{-1}\v_{{j}})=\nabla\w_{j}\to \nabla\w$
in $L^2(\Omega;\M^{N\times N})$
and $\nabla\w\in \overline{\mathbb K}+\M^{N\times N}_{skew}$. \\
Therefore
$\,\mathbb E(\w_{j})\to \mathbb E(\w)$ in $L^2(\Omega;\M^{N\times N})\,$.\\
Since $\widetilde\w_j:=\w_{{j}}-\mathbb P\w_{{j}}$ are equibounded in $H^{1}(\om;\mathbf R^{N})$, every subsequence of  $\widetilde\w_j$ has a weakly convergent subsequence and if $\widetilde\w$ is one of the limits we get $\mathbb E(\widetilde\w)=\mathbb E(\w)$ hence  by \eqref{globalequi}  $\mathcal L(\widetilde\w)=\mathcal L(\w)$.
Therefore  every subsequence of $\mathcal L(\widetilde\w_j)$ has a subsequence which converges to $\mathcal L(\w)$ that is the whole sequence $\mathcal L(\widetilde\w_j)$ converges
to $\mathcal L(\w)$, hence
\begin{equation}\label{Lw}
\displaystyle
-\mathcal L(\w)
\,=\,
-\limsup_{j\rightarrow +\infty} \mathcal L(\widetilde\w_j)
\,=\,
-\limsup_{j\rightarrow +\infty} \mathcal L(\w_{{j}})
\,\leq\,
\liminf_{j\rightarrow +\infty} t_j^{-1} \mathcal F_{h_{j}}(\v_{{j}})
\,
.
\end{equation}
Since \eqref{estimates} 
entails
$\limsup t_{j}^{-1}F_{h_{j}}(\v_{{j}})\le 0$, by \eqref{Lw} we get $\,\mathcal L(\w)\,\ge\, 0\,.$ \\
By taking into account that $\nabla\w\in \mathbb K+\M^{N\times N}_{skew}$ then, either $\,\nabla\w\in \M^{N\times N}_{skew}\,$ or
$$\w(\x)=\tau(\Rot-\Id)\x+\mathbf A\x+\mathbf c\,,\hbox{ for some }\tau>0\,, \ \Rot\in SO(N)\,,
\ \Rot\neq\Id,\  \mathbf A\in\M^{N\times N}_{skew}\,, \ \mathbf c\in \R^N.$$ The second case cannot occur
since in such case by \eqref{eurod} there would exist $\vartheta\in \mathbb R$ with  $\cos\vartheta <1$ and $\mathbf W\in \M^{N\times N}_{sym},\ \mathbf W\not\equiv \mathbf 0$ such that $\Rot=\Id+(1-\cos\vartheta)\mathbf W^{2}+(\sin\vartheta)\mathbf W\in SO(N)$ hence
\eqref{globalequi}, \eqref{comp}
would entail the contradiction below
\begin{equation}\begin{array}{ll}
 \mathcal L(\w)&=\ \displaystyle \,\tau\int_{\partial\Omega}\mathbf f\cdot(\Rot-\Id)\x\,d\H^{N-1}+\tau\int_{\om}\mathbf g\cdot(\Rot-\Id)\x\,=\,\\ \vspace{0.2cm}
 &=\ \displaystyle\,\tau (1-\cos\vartheta)\int_{\partial\Omega}\mathbf f\cdot\mathbf W^{2}\x\,d\H^{N-1}+\tau (1-\cos\vartheta)\int_{\om}\mathbf g\cdot\mathbf W^{2}\x\, < \,0 \,.
\end{array}
\end{equation}
Hence $\nabla\w\in \M^{N\times N}_{skew}$ that is  $\E(\w)=\mathbf 0$ which is again a contradiction since $\|\mathbb E(\w_{j})\|_{L^{2}}=1$ and $\mathbb E(\w_{j})\to \mathbb E(\w)$ in $L^2(\Omega; M^{N\times N})$.
\end{proof}
\textsl{Proof of Theorem}~\ref{mainth1} -
First we notice that minimizing sequences for $\mathcal F_{h_j}$ do exist
$h_j$ for every sequence of positive real numbers converging to $0$,
thank to Lemma \ref{infFh}.\\
Fix a sequence of real numbers $h_j>0$ converging to $0$ and a minimizing sequence $\v_{j}$ for $\F_{h_{j}}$.\\
Up to a preliminary extraction of a subsequence we can assume that $h_j$ is decreasing.\\
Since $-\infty < \inf \F_{h_{j}}\le 0$ there is $C>0$
such that $\F_{h_{j}}(\v_{j})\le C$, hence
by Lemmas \ref{compact} and
\ref{corcurl}
$$\|\mathbb E(\v_{j})\|_{L^{2}}\le C$$
and there exists $\v_{0}\in H^1(\Omega;\R^N)$ such that,
up to subsequences, $\mathbb E(\v_{{j}})\wconv \mathbb E(\v_{0})$ in $L^2(\Omega;\M^{N\times N})$,
thus proving \eqref{wstr}.
By Lemma \ref{convfunc} we get
$$\liminf_{j\rightarrow +\infty} \mathcal F_{h_{j}}(\v_{{j}})\ \geq \ \mathcal F(\v_{0})$$
and again by Lemma \ref{convfunc}, for every $\v\in H^1(\Omega;\R^N)$ there exists $\widetilde\v_{{j}}\in H^1(\Omega;\R^N)$ such that $\mathbb E(\widetilde\v_{{j}})\wconv \mathbb E(\v)$ in $L^2(\Omega;\M^{N\times N})$ and
$$\limsup_{j\rightarrow +\infty}  \mathcal F_{{j}}(\widetilde\v_{{j}})\ \leq \ \mathcal F(\v).$$
Hence
\begin{equation}
\mathcal F(\v_{0})\,\le \,\liminf_{n\rightarrow +\infty} \mathcal F_{{j}}(\v_{j})\,\le\, \liminf_{n\rightarrow +\infty}(\inf\F_{h_{j}}+o(1))\,\le\, \limsup_{j\rightarrow +\infty}  \mathcal F_{h_{j}}(\widetilde\v_{j})\,\leq\,  \mathcal F(\v)
\end{equation}
 and  \eqref{min} is proven. 
\\
Eventually we notice that  \eqref{convsqrth} 
is a straightforward consequence of
\eqref{bound2}, \eqref{bound3}, \eqref{bound4} and \eqref{proofconvsq}
in the proof of Lemma \ref{convfunc}, so we are left only to prove \eqref{effe0}.\\ To this aim it will be enough to notice that by
\eqref{wstr} and  \eqref{convsqrth} 
we get
 \begin{equation*}
\mathbb E(\v_{j})+\textstyle\frac{1}{2}h_{j}\nabla\v_{j}^{T}\nabla\v_{j}\ \wconv \ \mathbb E(\v_{0})-\textstyle\frac{1}{2}\mathbf W_{0}^{2}\qquad\hbox{in} \ L^{1}(\om; \M^{N\times N})
\end{equation*}
and by recalling \eqref{min} and \eqref{liminf} we get
\begin{equation*}
\mathcal F(\v_{0})\ = \ \liminf_{j\rightarrow +\infty} \mathcal F_{h_{j}}(\v_{j})\ \ge \ \int_{\om}\mathcal V_{0}(\xx, \mathbb E(\v_{0})-\textstyle\frac{1}{2}\mathbf W_{0}^{2})\,d\xx\ \ge \ \mathcal F(\v_{0})\ ,
\end{equation*}
thus proving \eqref{effe0}.
\qed
\section{Limit problem and Linear Elasticity}\label{sectionlimpb}
We denote by $\mathcal E: H^1(\Omega;\R^N)\to \mathbf R$ the energy functional of classical linear elasticity
\begin{equation}
\label{linearel}
{\mathcal  E}(\v):=\int_\Omega \mathcal V_0(\xx,\mathbb E(\v))\,d\x-\mathcal L(\v)\,.
\end{equation}
Notice that \eqref{Eclassic} is just a particular model case of \eqref{linearel} corresponding to \eqref{WquadIntrod}.\\
As it was already emphasized the inequality $\mathcal  F \leq \mathcal  E$ always holds true.
Moreover the two functionals cannot coincide: indeed $\mathcal F (\v) < \mathcal E(\v)$ whenever $\v(\xx)=\frac 1 2 \mathbf W^2 \xx$
with $\mathbf W\in \mathcal M^{N\times N}_{skew}$.
However we can show that the two functionals $\mathcal  F $ and $\mathcal E$, notwithstanding their differences, have the same minimum and same set of minimizers when the loads are equilibrated and compatible, say when the load fulfils both \eqref{globalequi} and \eqref{comp}.\\
Next results clarify the relationship between the minimizers of classical linear elasticity functional $\EE$
and the minimizers of functional $\FF$ defined in
\eqref{DTfuncintro}
, which is the variational limit
of nonlinear energies $\FF_h$ in the sense shown by Theorem~\ref{mainth1}.
\begin{theorem}
\label{linel}
Assume that \eqref{globalequi} and \eqref{comp} hold true. Then
\beeq\lab{equalmin}
 \min_{\v\in H^1(\Omega;\R^N)} \mathcal F(\v)\ =\ \min_{\vv\in H^1(\Omega;\R^N)}\mathcal E(\vv).
\eneq
and
\begin{equation}
\label{equivmin}
\argmin_{\v\in H^1(\Omega;\R^N)}  \mathcal F\ =\ \argmin_{\v\in H^1(\Omega;\R^N)}{\mathcal E}.
\end{equation}
\end{theorem}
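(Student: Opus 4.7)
The plan is to exploit the trivial inequality $\mathcal{F}\le\mathcal{E}$ together with a careful choice of competitor based on the optimal skew-symmetric matrix appearing in the definition of $\mathcal{F}$. First I would observe that by simply plugging $\mathbf{W}=\mathbf{0}$ into the minimization defining $\mathcal{F}$ in \eqref{DTfunc}, we get $\mathcal{F}(\vv)\le \mathcal{E}(\vv)$ for every $\vv\in H^1(\Omega;\R^N)$; in particular $\inf \mathcal{F}\le \inf \mathcal{E}$.

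Next, Theorem \ref{mainth1} guarantees the existence of a minimizer $\vv_0\in H^1(\Omega;\R^N)$ of $\mathcal{F}$, together with a matrix $\mathbf{W}_0\in\M^{N\times N}_{skew}$ achieving the internal minimum in \eqref{DTfunc}, so that \eqref{effe0} holds. The key step is to test with the competitor $\tilde{\vv}(\xx):=\vv_0(\xx)-\tfrac{1}{2}\mathbf{W}_0^2\xx$. Since $\mathbf{W}_0^{2}$ is symmetric (being the square of a skew-symmetric matrix), one has $\mathbb E(\tilde\vv)=\mathbb E(\vv_0)-\tfrac12\mathbf W_0^2$, and a direct computation yields
\begin{equation*}
\mathcal{E}(\tilde{\vv})\ =\ \int_\Omega\mathcal{V}_0\bigl(\xx,\mathbb E(\vv_0)-\tfrac{1}{2}\mathbf{W}_0^2\bigr)\,d\xx-\mathcal{L}(\vv_0)+\tfrac{1}{2}\mathcal{L}(\mathbf{W}_0^{2}\xx)\ =\ \mathcal{F}(\vv_0)+\tfrac{1}{2}\mathcal{L}(\mathbf{W}_0^{2}\xx).
\end{equation*}
If $\mathbf{W}_0\neq\mathbf{0}$, the compatibility assumption \eqref{comp} forces $\mathcal{L}(\mathbf{W}_0^{2}\xx)<0$, so $\mathcal{E}(\tilde\vv)<\mathcal{F}(\vv_0)$; combining with $\mathcal{F}(\tilde\vv)\le \mathcal{E}(\tilde\vv)$ gives $\mathcal{F}(\tilde\vv)<\mathcal{F}(\vv_0)=\min\mathcal{F}$, a contradiction. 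Therefore $\mathbf{W}_0=\mathbf{0}$ and $\mathcal{F}(\vv_0)=\mathcal{E}(\vv_0)$.

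From this I would deduce both statements. For \eqref{equalmin}: for any $\vv\in H^1(\Omega;\R^N)$ one has $\mathcal{E}(\vv_0)=\mathcal{F}(\vv_0)\le \mathcal{F}(\vv)\le \mathcal{E}(\vv)$, so $\vv_0$ minimizes $\mathcal{E}$ as well and $\min\mathcal{F}=\min\mathcal{E}$. For \eqref{equivmin}, the inclusion $\argmin\mathcal{F}\subseteq\argmin\mathcal{E}$ follows by applying the argument above to any $\vv^*\in\argmin\mathcal{F}$ (its associated optimal $\mathbf{W}$ must vanish, hence $\mathcal{E}(\vv^*)=\mathcal{F}(\vv^*)=\min\mathcal{F}=\min\mathcal{E}$); the reverse inclusion is immediate, since for $\vv^*\in\argmin\mathcal{E}$ we have $\mathcal{F}(\vv^*)\le \mathcal{E}(\vv^*)=\min\mathcal{E}=\min\mathcal{F}$.

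The main conceptual obstacle is recognizing the right competitor: the translation by the affine map $\tfrac{1}{2}\mathbf{W}_0^2\xx$ exactly kills the $-\tfrac{1}{2}\mathbf{W}_0^{2}$ correction inside $\mathcal{V}_0$ (using symmetry of $\mathbf{W}_0^2$) while introducing the term $\tfrac{1}{2}\mathcal{L}(\mathbf{W}_0^2\xx)$, which under \eqref{comp} has the sign needed to produce the contradiction; once this construction is in hand the rest is just chasing inequalities.
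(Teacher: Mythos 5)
Your proof is correct and rests on exactly the same mechanism as the paper's: translating by the affine competitor $\z_{\mathbf W}=\tfrac12\mathbf W^2\xx$, noting $\mathbb E(\v-\z_{\mathbf W})=\mathbb E(\v)-\tfrac12\mathbf W^2$, and using that \eqref{comp} forces $\mathcal L(\z_{\mathbf W})<0$ for $\mathbf W\neq\mathbf 0$ to contradict minimality unless the optimal skew matrix vanishes (this is precisely the paper's computation \eqref{6.6}). The only difference is organizational: the paper first derives \eqref{equalmin} from the identity $\min\mathcal F=\min\mathcal E-\max_{\mathbf W}\mathcal L(\z_{\mathbf W})$ obtained by exchanging the two minimizations, whereas you obtain the equality of minima and the inclusion $\argmin\mathcal F\subset\argmin\mathcal E$ simultaneously from the single contradiction argument, which makes the reverse inclusion immediate.
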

\begin{proof}
Both functionals $\mathcal F,\,\mathcal E$ do have minimizers under conditions \eqref{globalequi}, \eqref{comp}:
$\mathcal E$ by classical results and $\mathcal F$ by Theorem \ref{mainth1}.
Taking into account that
$\mathcal F(\v)\leq\mathcal E(\v)$ for every $\v\in H^1(\Omega;\R^N)$, and setting
$\,\mathbf \z_{\mathbf W}(x):=\textstyle\frac{1}{2}\mathbf W^{2}\x$
for every $\mathbf W\in \M_{skew}^{N\times N}$ \,, we get
$\, \E(\z_{\mathbf W})= \frac{1}{2}\mathbf W^{2}\,$ and
\beeq
\label{minFvsminE}
\begin{array}{ll}
&\displaystyle
\min_{\v\in H^1(\Omega;\R^N)} \mathcal E(\v) \ \geq \
\min_{\v\in H^1(\Omega;\R^N)} \mathcal F(\v)=
\\
&\displaystyle
\min_{\v\in H^1(\Omega;\R^N)}\left\{\min_{\mathbf W\in \M^{N\times N}_{skew}}\left\{\displaystyle\int_\Omega \mathcal V_0( \xx, \,\mathbb E(\v)-\textstyle\frac{1}{2}\mathbf W^{2})\, d\xx- \mathcal L(\v) \right\}\right\}=
\\
& \\
&\displaystyle
\min_{\mathbf W\in \M^{N\times N}_{skew}}\left\{\min_{\v\in H^1(\Omega;\R^N)}\left\{\displaystyle\int_\Omega \mathcal V_0(\xx, \,\mathbb E(\v)-\textstyle\frac{1}{2}\mathbf W^{2})\, d\xx- \mathcal L(\v) \right\}\right\}=\\
&\\
&\displaystyle\min_{\mathbf W\in \M^{N\times N}_{skew}}\left\{\min_{\v\in H^1(\Omega;\R^N)}\left\{\displaystyle\int_\Omega \mathcal V_0(\xx, \,\mathbb E(\v-\z_{\mathbf W}))\, d\xx- \mathcal L(\v-\z_{\mathbf W})-\mathcal L(\z_{\mathbf W}) \right\}\right\}=\\
&\\
&\displaystyle\min_{\z\in H^1(\Omega;\R^N)}\mathcal E(\z)\ \,-\max_{\mathbf W\in \M^{N\times N}_{skew}}
\mathcal L(\z_{\mathbf W})
\ \geq \ \min_{ H^1(\Omega;\R^N)}\mathcal E\,.
\end{array}
\end{equation}
where last inequality follows by $\mathcal L(z_{\mathbf W})\leq 0$, due to \eqref{comp}.
Therefore \eqref{equalmin} is proved and we are left to show \eqref{equivmin}.\\
First assume $\v\in \argmin_{\v\in H^1(\Omega;\R^N)} \mathcal F$ and let
\begin{equation}\label{Wv}
\displaystyle\mathbf W_{\v}\in \argmin \left\{\int_{\om}\mathcal V_0\Big( \xx,\, \mathbb E(\v)-\textstyle\frac{1}{2}\mathbf W^{2}\Big)\, d\xx: \ \mathbf W\in \M^{N\times N}_{skew}\right\}.
\end{equation}
If $\mathbf W_{\v}\neq \mathbf 0$ then, by setting $\z_{\mathbf W_{\v}}=\frac{1}{2}\mathbf W_{\v}^{2}\,\xx$
we get $\mathbb E(\z_{\mathbf W_\v})=\nabla\z_{\mathbf W_\v}= \frac{1}{2}\mathbf W_\v^{2}$ and, by compatibility \eqref{comp} we obtain
\begin{equation}\label{6.6}
\begin{array}{ll}
&\displaystyle\ \min \mathcal F= \mathcal F(\v)= \int_{\om}\mathcal V_0\Big(\xx,\,\mathbb E(\v-\textstyle\z_{\mathbf W_{\v}})\Big)\, d\xx-\mathcal L(\v- \z_{\mathbf W_{\v}})-\mathcal L(\z_{\mathbf W_{\v}})\,= \vspace{0.2cm}\\
&\displaystyle
\mathcal E (\v- \z_{\mathbf W_{\v}})\,-\, \mathcal L(\z_{\mathbf W_{\v}})
\geq\ \min\, \mathcal E\,-\, \mathcal L(\z_{\mathbf W_{\v}}) \,>\,  \min\, \mathcal E\,,
\end{array}
\end{equation}
say a contradiction. Therefore $\mathbf W_{\v}=\mathbf 0$, $\z_{\mathbf W_{\v}}=\mathbf 0$,
and all the inequalities in \eqref{6.6} turn out to be equalities, hence we get $\mathcal F(\v)=\mathcal E(\v)=\min \mathcal E=\min \mathcal  F$,
say $\v\in \argmin_{ H^1(\Omega;\R^N)}\mathcal E$
and $\argmin_{ H^1(\Omega;\R^N)}\mathcal F \subset \argmin_{ H^1(\Omega;\R^N)}\mathcal E.$
\\ In order to show the opposite inclusion, we assume $\v\in \argmin_{\v\in H^1(\Omega;\R^N)} \mathcal E$ and still referring to the choice \eqref{Wv} we set $\z_{\mathbf W_\v}=\frac 1 2 \mathbf W_\v^2\,\xx$. Then
\begin{eqnarray}\label{contrad}
   \mathcal F(\v)
   \!\!&=&\!\!
   \int_{\om}\mathcal V_0\Big( \xx,\,\mathbb E(\v-\textstyle\z_{\mathbf W_{\v}})\Big)\, d\xx-\mathcal L(\v- \z_{\mathbf W_{\v}})-\mathcal L(\z_{\mathbf W_{\v}})\,=\\
   \nonumber \vspace{0.1cm}
   \!\!&=&\!\!
   \mathcal E(\v- \z_{\mathbf W_{\v}})-\mathcal L(\z_{\mathbf W_{\v}})
   \,\geq\,
   \mathcal F(\v- \z_{\mathbf W_{\v}})-\mathcal L(\z_{\mathbf W_{\v}})\,.
\end{eqnarray}
This leads to the contradiction $ \mathcal F(\v) > \mathcal F(\v-\z_{\mathbf W_{\v}})$
if $\z_{\mathbf W_\v}\neq \mathbf 0$, due to \eqref{comp}; therefore $\z_{\mathbf W_\v} = \mathbf 0$ and we have equalities in place of inequalities in \eqref{contrad}: therefore $\mathcal E(\v)=\mathcal F(\v)$ and $\v\in \argmin_{ H^1(\Omega;\R^N)}\mathcal F$.
\end{proof}
\begin{corollary} Assume the standard structural assumptions, \eqref{globalequi},\,\eqref{comp}
and $\mathbf W_{0}\in \M^{N\times N}_{skew}$ is the matrix whose existence
is warranted by Theorem \ref{mainth1}. \ Then $\mathbf W_{0}=\mathbf 0$.
\end{corollary}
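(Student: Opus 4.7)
The plan is to argue by contradiction, leveraging the optimality characterization of $\mathbf{W}_0$ provided by \eqref{effe0}, the identity $\min\mathcal F=\min\mathcal E$ from Theorem~\ref{linel}, and the strict compatibility assumption \eqref{comp}. Nothing more delicate than these three ingredients should be needed, so I do not anticipate a serious obstacle.

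First, I would record that the pair $(\v_0,\mathbf W_0)$ supplied by Theorem~\ref{mainth1} satisfies $\mathcal F(\v_0)=\min\mathcal F$, together with the explicit representation
\begin{equation*}
\mathcal F(\v_0) \ =\ \int_\Omega \mathcal V_0\bigl(\xx,\mathbb E(\v_0)-\tfrac{1}{2}\mathbf W_0^{\,2}\bigr)\,d\xx\,-\,\mathcal L(\v_0).
\end{equation*}
By Theorem~\ref{linel} we also have $\min\mathcal F=\min\mathcal E$, and the minimum of $\mathcal E$ is attained.

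Next, arguing by contradiction I would assume $\mathbf W_0\neq\mathbf 0$ and introduce the competitor $\u(\xx):=\v_0(\xx)-\tfrac{1}{2}\mathbf W_0^{\,2}\xx\in H^1(\Omega;\R^N)$. Since $\tfrac12\mathbf W_0^{\,2}$ is a constant symmetric matrix, one has $\mathbb E(\u)=\mathbb E(\v_0)-\tfrac12\mathbf W_0^{\,2}$ and $\mathcal L(\u)=\mathcal L(\v_0)-\mathcal L(\tfrac12\mathbf W_0^{\,2}\xx)$. Applying the strict compatibility inequality \eqref{comp} to the nonzero skew-symmetric matrix $\mathbf W_0$ gives $\mathcal L(\tfrac12\mathbf W_0^{\,2}\xx)<0$, hence
\begin{equation*}
\mathcal E(\u)\,=\,\int_\Omega\mathcal V_0\bigl(\xx,\mathbb E(\v_0)-\tfrac12\mathbf W_0^{\,2}\bigr)\,d\xx\,-\,\mathcal L(\v_0)\,+\,\mathcal L(\tfrac12\mathbf W_0^{\,2}\xx)\,=\,\mathcal F(\v_0)\,+\,\mathcal L(\tfrac12\mathbf W_0^{\,2}\xx)\,<\,\mathcal F(\v_0).
\end{equation*}

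Since $\mathcal F(\v_0)=\min\mathcal E$, this strict inequality $\mathcal E(\u)<\min\mathcal E$ is a contradiction, forcing $\mathbf W_0=\mathbf 0$. The only step that requires any care is the clean decomposition of $\mathcal L(\u)$ (so that the compatibility condition can be invoked verbatim) and the combination of \eqref{effe0} with Theorem~\ref{linel} to rewrite $\mathcal F(\v_0)$ as an energy of $\mathcal E$-type evaluated on the shifted deformation.
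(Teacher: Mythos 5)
Your proof is correct, but it takes a genuinely different route from the paper's. You combine \eqref{effe0} with $\min\mathcal F=\min\mathcal E$ from Theorem \ref{linel} and test $\mathcal E$ on the shifted competitor $\u=\v_0-\tfrac12\mathbf W_0^{\,2}\xx$, which immediately yields $\mathcal E(\u)=\mathcal F(\v_0)+\mathcal L(\tfrac12\mathbf W_0^{\,2}\xx)<\min\mathcal E$ when $\mathbf W_0\neq\mathbf 0$, by the strict inequality in \eqref{comp}. The paper instead argues that, since $\v_0\in\argmin\mathcal E$ by \eqref{equivmin}, the equality $\mathcal F(\v_0)=\mathcal E(\v_0)$ forces $\int_\Omega\mathcal V_0(\xx,\mathbb E(\v_0)-\tfrac12\mathbf W_0^{\,2})\,d\xx=\int_\Omega\mathcal V_0(\xx,\mathbb E(\v_0))\,d\xx$; expanding the quadratic form and invoking the Euler--Lagrange equation for $\mathcal E$ at $\v_0$ (tested against $\mathbf W_0^{\,2}\xx$) then gives $0\le\int_\Omega\mathcal V_0(\xx,\tfrac12\mathbf W_0^{\,2})\,d\xx=\tfrac12\mathcal L(\mathbf W_0^{\,2}\xx)<0$. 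Your argument is more elementary: it needs neither the Euler--Lagrange equation nor the identification of minimizer sets \eqref{equivmin}, only the equality of minima \eqref{equalmin}, and it is essentially the same translation-by-$\z_{\mathbf W}$ device the paper already uses in \eqref{minFvsminE} and \eqref{6.6}. What the paper's version buys in exchange is a quantitative identity, $\int_\Omega\mathcal V_0(\xx,\tfrac12\mathbf W_0^{\,2})\,d\xx=\tfrac12\mathcal L(\mathbf W_0^{\,2}\xx)$, which makes transparent exactly which term changes sign under \eqref{comp}; but for the stated conclusion your shorter argument suffices.
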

\begin{proof} Let $ \v_{0}$ be in $\argmin \F$, $\mathbf W_{0}
$ be the skew symmetric matrix in the claim of Theorem \ref{mainth1}
and assume by contradiction that $\mathbf W_{0}\neq \mathbf 0$. By \eqref{equalmin} and \eqref{equivmin} we get
\begin{equation*}
 \int_{\om}\mathcal V_{0}(\xx, \mathbb E(\v_{0})-\textstyle\frac{1}{2}\mathbf W_{0}^{2})\,d\xx\,
 =\,
 \displaystyle\int_{\om}\mathcal V_{0}(\xx, \mathbb E(\v_{0}))\,d\xx\,,
\end{equation*}
so by taking into account that $\mathcal V_{0}$ is a positive definite quadratic form
\begin{equation*}
\int_{\om}\mathcal V_{0}(\xx,\textstyle\frac{1}{2}\mathbf W_{0}^{2})\,d\xx-\displaystyle\frac{1}{2}\int_{\om}D\mathcal V_{0}(\xx, \mathbb E(\v_{0}))\cdot \mathbf W_{0}^{2}
\,=\,0.
\end{equation*}
Since by \eqref{equivmin} $\v_{0}\in \argmin \mathcal E$, the Euler-Lagrange equation yields
\begin{equation*}
\int_{\om}D\mathcal V_{0}(\xx, \mathbb E(\v_{0}))\cdot \mathbf W_{0}^{2}\,=\,\mathcal L(\mathbf W_{0}^{2}\xx)\,,
\end{equation*}
hence
\begin{equation*}
0 \leq \int_{\om}\mathcal V_{0}(\xx,\textstyle\frac{1}{2}\mathbf W_{0}^{2})\,d\xx\,=\displaystyle\,\frac 12 \,\mathcal L(\mathbf W_{0}^{2}\xx) < 0
\end{equation*}
which is a contradiction by \eqref{comp}.
\\ Hence $\mathbf W_{0}=\mathbf 0$.
\end{proof}
If strong inequality in \eqref{comp} is replaced by a weak inequality, then Theorem \ref{linel} cannot hold as it is, nevertheless
a weaker claim still holds true as it is shown by the following general result.
\begin{proposition} \lab{exinfmin}If the structural assumptions together with \eqref{globalequi} are fulfilled, but \eqref{comp} is replaced by
\begin{equation}
\lab{wcompbis} \mathcal L( \mathbf W ^2 \xx
)\le 0\qquad \forall\, \mathbf W\in \M^{N\times N}_{skew}
\end{equation}
then $\argmin \mathcal F$ is still nonempty and
\beeq\lab{eqmin}
\min\F=\min\mathcal E\,,
\eneq
but the coincidence of minimizers sets is replaced by the inclusion
\begin{equation}\lab{incl}
\argmin\mathcal E\subset\argmin\F\,.
\end{equation}
If \eqref{wcompbis} holds true and there exists $\mathbf U\in \M^{N\times N}_{skew},\ \mathbf U\neq \mathbf 0$ such that $\mathcal L(\mathbf U^2 \xx)=0$,
 then $\F$ admits infinitely many minimizers which are not minimizers of $\mathcal E$, precisely
\begin{equation}\label{argmin=}
\argmin \mathcal E\ \,\mathop{\subset}_{\neq} \ \, \argmin \mathcal E \, +\,  \left\{ \,\mathbf U^2\xx\, : \  \mathbf U \in \mathcal M^{N\times N}_{skew},\ \mathcal L(\mathbf U^2\xx)=0 \right\}
\ \,\mathop{\subset} \ \,\argmin \mathcal F \,,
\end{equation}
where the last inclusion is an equality in 2D:
\begin{equation}\label{argmin==}
\argmin \mathcal E\ \mathop{\subset}_{\neq} \ \argmin \mathcal E \, +\,  \left\{ \,-\, t\,\xx\, : \  t\geq 0 \right\}
\ = \ \argmin \mathcal F \,,\qquad \hbox{ if } N=2\,.
\end{equation}
\end{proposition}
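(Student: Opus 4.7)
\textbf{Proof plan for Proposition~\ref{exinfmin}.}
The plan is to revisit the proof of Theorem~\ref{linel} and isolate where the strict inequality in \eqref{comp} was used; it turns out that only the final step needs to be modified under \eqref{wcompbis}. First, to get \eqref{eqmin} I would rerun the chain \eqref{minFvsminE} with $\z_\mathbf W(\xx) := \tfrac12 \mathbf W^2 \xx$ exactly as in the proof of Theorem~\ref{linel}, obtaining
\[ \min_{H^1}\F \ =\ \min_{H^1}\mathcal E \,-\, \max_{\mathbf W \in \M^{N\times N}_{skew}} \mathcal L(\z_\mathbf W)\,. \]
Under \eqref{wcompbis} we have $\mathcal L(\z_\mathbf W) = \tfrac12 \mathcal L(\mathbf W^2 \xx) \leq 0$ with equality at $\mathbf W = \mathbf 0$, so the maximum is zero and \eqref{eqmin} follows. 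For \eqref{incl}, any $\v \in \argmin \mathcal E$ satisfies $\min \F \leq \F(\v) \leq \mathcal E(\v) = \min \mathcal E = \min \F$, hence $\v \in \argmin \F$; in particular $\argmin \F \neq \emptyset$ because $\argmin \mathcal E \neq \emptyset$ is classical under \eqref{globalequi}.

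For the two inclusions in \eqref{argmin=}, I would fix $\v \in \argmin \mathcal E$ and $\mathbf U \in \M^{N\times N}_{skew}$ with $\mathcal L(\mathbf U^2 \xx) = 0$ and consider $\w := \v + \mathbf U^2 \xx$. Using the competitor $\mathbf W := \sqrt{2}\,\mathbf U$ in the inner minimization \eqref{DTfunc}, one has $\tfrac12 \mathbf W^2 = \mathbf U^2 = \mathbb E(\w) - \mathbb E(\v)$, hence
\[ \F(\w) \,\leq\, \int_\om \mathcal V_0(\xx, \mathbb E(\v))\, d\xx \,-\, \mathcal L(\v) \,-\, \mathcal L(\mathbf U^2 \xx) \,=\, \mathcal E(\v) \,=\, \min \F\,, \]
giving the second inclusion. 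For the strict inclusion, assuming $\mathbf U \neq \mathbf 0$, I would expand the quadratic $\mathcal E$ about $\v$ and use the Euler--Lagrange equation for $\v$ applied with test field $\mathbf U^2 \xx$, which has $\mathbb E(\mathbf U^2 \xx) = \mathbf U^2$, to compute
\[ \mathcal E(\w) - \mathcal E(\v) \,=\, \int_\om D\mathcal V_0(\xx, \mathbb E(\v)) \cdot \mathbf U^2\, d\xx \,+\, \int_\om \mathcal V_0(\xx, \mathbf U^2)\, d\xx \,-\, \mathcal L(\mathbf U^2 \xx) \,=\, \int_\om \mathcal V_0(\xx, \mathbf U^2)\, d\xx \,>\, 0 \]
by ellipticity \eqref{Z3} together with $\mathbf U^2 \neq \mathbf 0$; hence $\w \notin \argmin \mathcal E$.

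For the 2D equality in \eqref{argmin==}, I would exploit that every $2{\times}2$ skew matrix $\mathbf W$ satisfies $\mathbf W^2 = -a^2 \mathbf I$ for some $a \in \R$, as used in Remark~\ref{Fform}. In this setting, the hypothesis that some $\mathbf U \neq \mathbf 0$ fulfils $\mathcal L(\mathbf U^2 \xx) = 0$ reduces to $\mathcal L(\xx) = 0$, hence $\mathcal L(t\,\xx) = 0$ for every $t \in \R$. Now given $\w \in \argmin \F$, let $\mathbf W_\w$ be the inner minimizer in \eqref{DTfunc} (which exists, as recalled after that definition) and write $\mathbf W_\w^2 = -a_*^2 \mathbf I$. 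Setting $\u := \w + \tfrac{a_*^2}{2}\,\xx$, one has $\mathbb E(\u) = \mathbb E(\w) - \tfrac12 \mathbf W_\w^2$, whence
\[ \F(\w) \,=\, \int_\om \mathcal V_0(\xx, \mathbb E(\u))\, d\xx \,-\, \mathcal L(\w) \,=\, \mathcal E(\u) \,+\, \mathcal L(\u-\w) \,=\, \mathcal E(\u) \,+\, \mathcal L\bigl(\tfrac{a_*^2}{2}\,\xx\bigr) \,=\, \mathcal E(\u)\,. \]
Since $\F(\w) = \min \F = \min \mathcal E$, this forces $\u \in \argmin \mathcal E$ and $\w = \u - t\,\xx$ with $t := a_*^2/2 \geq 0$, establishing the reverse containment.

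The main technical obstacle is this last 2D reduction: it relies crucially on $\mathbf W^2$ being a scalar multiple of $\mathbf I$, so that the defect displacement $\tfrac12 \mathbf W^2 \xx$ is radial, sits in a one-parameter family, and lies in the kernel of $\mathcal L$ as soon as $\mathcal L(\xx) = 0$. In dimension $N \geq 3$ the matrix $\mathbf W^2$ is a general negative semidefinite symmetric matrix depending on several parameters, so the second inclusion in \eqref{argmin=} need not be an equality and one cannot canonically normalize $\w$ into the form $\u + \mathbf U^2 \xx$.
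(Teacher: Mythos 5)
Your proposal is correct and follows essentially the same route as the paper: the min-swap identity $\inf\F=\min\mathcal E-\sup_{\mathbf W}\mathcal L(\z_{\mathbf W})$ for \eqref{eqmin}, the competitor $\mathbf W=\sqrt2\,\mathbf U$ for the second inclusion in \eqref{argmin=}, and the one-dimensionality of $\M^{2\times2}_{skew}$ (so that $\mathbf W_{\w}^{2}=-a_*^{2}\mathbf I$ and the shift $\w\mapsto\w+\tfrac{a_*^{2}}{2}\xx$ lands in $\argmin\mathcal E$) for \eqref{argmin==}. The only cosmetic difference is that you certify the strictness of the first inclusion via a quadratic expansion plus the Euler--Lagrange equation, whereas the paper invokes strict convexity to write $\argmin\mathcal E=\v_*+\mathcal R$ and observes $\z_{\mathbf U}\notin\mathcal R$; the two arguments are equivalent.
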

\begin{proof}
The set $\argmin \mathcal E$ is nonempty by classical arguments.
Fix $\v_{*}\in \argmin \mathcal E$. Then for every $\v\in H^{1}(\om;\mathbf R^{N})$ and for every  $\mathbf W\in \M^{N\times N}_{skew}$, by setting $\z_{\mathbf W}=\frac 1 2 \mathbf W^2\xx$, we get
\begin{equation}\begin{array}{ll}
&\displaystyle\F(\v_{*})\le \mathcal E(\v_{*})\le  \mathcal E(\v-\z_{\mathbf W})=\int_{\om}\mathcal V_{0}(x, \mathbb E(\v)-\textstyle\frac{1}{2}\mathbf W^{2})\,dx- \mathcal L(\mathbf v-\z_{\mathbf W})\le\\
&\\
&\displaystyle\le\int_{\om}\mathcal V_{0}(x, \mathbb E(\v)-\textstyle\frac{1}{2}\mathbf W^{2})\,dx- \mathcal L(\mathbf v)
\end{array}
\end{equation}
hence for every $\v\in H^{1}(\om;\mathbf R^{N})$
\begin{equation}
\F(\v_{*})\le\displaystyle\min_{\mathbf W\in \M^{N\times N}_{skew}}\int_\Omega \mathcal V_{0}(x, \mathbb E(\v)-\textstyle\frac{1}{2}\mathbf W^{2})\, dx- \mathcal L(\v)=\F(\v)
\end{equation}
thus proving that $\argmin \mathcal F$ is nonempty and \eqref{incl}.\\
Moreover by setting
 \begin{equation}
\displaystyle\mathbf W_{\v}\in \argmin \left\{\int_{\om}
\mathcal V_{0}\big(x, \mathbb E(\v)-\textstyle\frac{1}{2}\mathbf W^{2}\big)\, dx:
\ \mathbf W\in \M^{N\times N}_{skew}\right\},
\qquad
\forall\ \v\in H^{1}(\om;\mathbf R^{N})
\end{equation}
condition \eqref{wcompbis} entails
\begin{equation}
\F(\v_{*})=\displaystyle\int_\Omega \mathcal V_{0}\big(x, \mathbb E(\v_{*}-\z_{\mathbf W_{\v_{*}}})\big)\, dx- \mathcal L(\v_{*})=\mathcal E(\v_{*}-\z_{\mathbf W_{\v_{*}}})-\mathcal L(\z_{\mathbf W_{\v_{*}}})\ge \mathcal E(\v_{*})
\end{equation}
hence \eqref{eqmin} follows by $\mathcal F\leq \mathcal E$.\\
If \eqref{wcompbis} holds true, $\mathcal L(\z_{\mathbf U})=0$ for some $\mathbf 0 \neq \mathbf U\in \M^{N\times N}_{skew}$ and $\vv^*\in\! \argmin \mathcal E$ then, by comparing the finite dimensional minimization over $\mathbf W$ with evaluation at $\mathbf W = \mathbf U$ and exploiting \eqref{eqmin}, we get 
\begin{equation}\begin{array}{ll}
&\F(\v_{*}+\z_{\mathbf U})=\displaystyle \min_{\mathbf W}\int_\Omega \mathcal V_{0}\big(x, \mathbb E(\v_{*})+\textstyle\frac{1}{2}\mathbf U^{2}-\textstyle\frac{1}{2}\mathbf W^{2}\big)\, dx- \mathcal L(\v_{*}+\z_{\mathbf U})\le
\vspace{0.15cm}\\
&\displaystyle\le \int_{\om}\mathcal V_{0}(x, \mathbb E(\v_{*}))\,dx-\mathcal L(\v_{*})
= \mathcal E(\v_{*})  = \min \mathcal E = \min \mathcal F \,,
\end{array}
\end{equation}
that is $\v_{*}+\z_{\mathbf U}\in \argmin\F$ . \,Since $\mathcal V_{0}$ is strictly convex we get
$\argmin\mathcal E=\{\v_{*}+\z: \z\in\mathcal R\}$ hence $\mathcal E(\v_{*}+\z_{\mathbf U})> \mathcal E(\v_{*})$
thus proving the strict inclusion in \eqref{argmin=}.\\
Concerning last claim, if \eqref{wcompbis} holds true, $\mathcal L(\z_{\mathbf U})=0$ for some $\mathbf 0 \neq \mathbf U\in \M^{N\times N}_{skew}$, $\vv^*\in\! \argmin \mathcal F$ and $N=2$,  then $\M^{2\times 2}_{skew}$ is a 1D space, therefore we can assume $\mathbf U=(\e_1\otimes \e_2 - \e_2\otimes \e_1 )$, $\mathbf U^2=-\mathbf I$, $\M^{2\times 2}_{skew}=
\span \mathbf U$
and $\mathbf W_{\vv_*}=\lambda \mathbf U$
for some $\lambda\in \R$, and by \eqref{eqmin}
\begin{eqnarray*}
  \min \mathcal E \,=\, \min \mathcal F \,=\, \mathcal F(\vv*) \,=\, \int_{\Omega}\mathcal V_0\big(\mathbb E(\vv*)-\frac 1 2 \mathbf W_{\vv_*}^2\big)\,d\xx -\mathcal L (\vv*)\,=\\
  =\,\int_{\Omega}\mathcal V_0\big(\mathbb E(\vv_*)-\frac {\lambda^2} 2 \mathbf U^2\big)\,d\xx -\mathcal L (\vv_*-\zz_{\mathbf \lambda U})\,=\,\mathcal E(\vv_*-\zz_{\mathbf \lambda U}) \,,\
\end{eqnarray*}
that is $(\vv_*\!-\!\zz_{\mathbf \lambda U})\in \argmin \mathcal E $ for every $\vv_*\!\in \!\argmin \mathcal F$,
therefore we get \vskip0.1cm
\centerline{$\argmin \mathcal F -\left\{ \,\zz_{\lambda \mathbf U}\, : \  \lambda\in \R \right\} \subset \argmin \mathcal E$\,, \quad
$\argmin \mathcal F \subset \argmin \mathcal E + \left\{ \,\zz_{\lambda \mathbf U}\, : \  \lambda\in \R \right\} $\,,}
\vskip0.1cm
hence by $z_{\lambda \mathbf U}=\frac {\lambda^2}2 \mathbf U^2\xx=-\frac {\lambda^2}2 \xx$ we obtain the equality in place of the last inclusion in \eqref{argmin=}, hence \eqref{argmin==}.
\end{proof}
Next example depicts the above Proposition in a simple explicit case.
\begin{example}\lab{infmany}
{\rm Let $\om=(-1/2,1/2)^{2},\  \mathbf g\equiv \mathbf 0,\ \mathbf f= (\mathbf 1_{S_{+}}-\mathbf 1_{S_{-}})\mathbf e_{2}+(\mathbf 1_{T_{+}}-\mathbf 1_{T_{-}})\mathbf e_{1}$ where $S_{\pm}$
denote respectively the right and the left side, and $T_{\pm}$ the upper and the lower side of the square
(see Fig.\ref{FigEx6.5}).
\vskip0.5cm
\begin{center}
\begin{tikzpicture}[scale=1.1]
 %
 \begin{scope}
 \draw [line width=0.08cm,red] [-] (-4.04,-1) -- (-0.96,-1);
 \draw [line width=0.08cm,red] [-] (-4.04,+2) -- (-0.96,+2);
 \draw [line width=0.08cm,red] [-] (-4.,-1) -- (-4.,+2);
 \draw [line width=0.08cm,red] [-] (-1.,-1) -- (-1.,+2);
 \end{scope}
\begin{scope}[very thick]
 \draw[->] (-3.7,2.3) -- (-3.3,2.3);
 \draw[->] (-3.2,2.3) -- (-2.8,2.3);
 \draw[->] (-2.7,2.3) -- (-2.3,2.3);
 \draw[->] (-2.2,2.3) -- (-1.8,2.3);
 \draw[->] (-1.7,2.3) -- (-1.3,2.3);
 \draw[->] (-3.3,-1.3) -- (-3.7,-1.3) ;
 \draw[->] (-2.8,-1.3) -- (-3.2,-1.3) ;
 \draw[->] (-2.3,-1.3) -- (-2.7,-1.3) ;
 \draw[->] (-1.8,-1.3) -- (-2.2,-1.3) ;
 \draw[->] (-1.3,-1.3) -- (-1.7,-1.3) ;
 \draw[->] (-4.3,-0.3) -- (-4.3,-0.7);
 \draw[->] (-4.3, 0.2) -- (-4.3,-0.2);
 \draw[->] (-4.3, 0.7) -- (-4.3, 0.3);
 \draw[->] (-4.3, 1.2) -- (-4.3, 0.8);
 \draw[->] (-4.3, 1.7) -- (-4.3, 1.3);
 \draw[->] (-0.7,-0.7) -- (-0.7,-0.3);
 \draw[->] (-0.7,-0.2) -- (-0.7, 0.2);
 \draw[->] (-0.7, 0.3) -- (-0.7, 0.7);
 \draw[->] (-0.7, 0.8) -- (-0.7, 1.2);
 \draw[->] (-0.7, 1.3) -- (-0.7, 1.7);
\end{scope}
\end{tikzpicture}
\end{center}
\vskip-0.4cm
\begin{figure}[h]
   \caption{Example \ref{infmany}.}
   \label{FigEx6.5}
\end{figure}
A straightforward computation gives, for suitable $\lambda\in\R$,
\begin{equation}\lab{nullbd}
\int_{\partial\Omega}\mathbf f\cdot\mathbf W^{2}\,\x\,d\H^{N-1}=
-\lambda^2\int_{\partial\Omega}\mathbf f\cdot\x\,d\H^{N-1}=0
\qquad
\forall\ \mathbf W\in \M^{2\times 2}_{skew} \,,
\end{equation}
Then,  since \eqref{globalequi} and \eqref{wcompbis} are fulfilled, by \eqref{argmin==} in  Proposition \ref{exinfmin}, we know that, for every choice of $\mathcal V_{0}$ satisfying the standard structural hypotheses,
  $\F$ has infinitely many minimizers $\v$ which are not
minimizers of $\mathcal E$, given by
$$\v=(\v^*-t\xx)\,\in  \argmin \mathcal F\setminus \argmin\mathcal E\qquad \hbox{if }\v^*\in \argmin \mathcal E,\ t>0\,.$$}
\end{example}
\vskip0.1cm
It is quite natural to ask whether condition \eqref{comp}, which is essential in the proof of Theorem \ref{mainth1}, may be dropped in order to obtain at least existence of $\min \mathcal F$: the answer is negative.\\
Indeed the next remark shows that, when compatibility inequality in \eqref{comp} is reversed
for at least one choice of the skew-symmetric matrix $\mathbf W$, then $\mathcal F$ is unbounded from below.
\begin{remark}\lab{controsegno} If
{\rm 
\beeq\lab{controsegnoeq}
\exists \,\mathbf W_*\in  \M^{N\times N}_{skew}\, :\qquad
\mathcal L(\z_{\mathbf W_*})>0\,, \quad \hbox{where }\z_{\mathbf W_*}=\frac  1 2 \mathbf W_*^2 \xx\,,
\eneq
 then
\beeq\lab{menoinf}
\inf_{\v\in H^1(\Omega;\R^N)} \mathcal F(\v)\ =\ -\infty.
\eneq
Indeed, by arguing as in \eqref{minFvsminE} 
we get
\begin{equation}\label{minFvsminF}
  \inf_{ H^1(\Omega;\R^N)} \mathcal F \ =\ \min_{H^1(\Omega;\R^N)} \mathcal E \ \,-
  \sup_{\mathbf W\in \M^{N\times N}_{skew}}
\mathcal L(\z_{\mathbf W}) \qquad \hbox{where }\ \z_{\mathbf W}=\frac 1 2  \mathbf W^2\x\,.
\end{equation}
Hence
\begin{equation*}
  \inf_{ H^1(\Omega;\R^N)} \mathcal F
  \ \leq \ \min_{H^1(\Omega;\R^N)} \mathcal  E \ \,-  \, \tau\mathcal L(\,\z_{\mathbf W_*}) \qquad \forall \,\tau>0\,,
\end{equation*}
which entails \eqref{menoinf}.}
\end{remark}%
Next example shows that in case of uniform compression along the whole boundary functional $\mathcal F$ is unbounded from below.
\begin{example} \lab{nocrit}
{\rm  Assume $\om\subset \R^N$ is a Lipschitz, connected open set$,\  N=2,3,\ \mathbf g\equiv \mathbf 0,\ \mathbf f= -\mathbf n,\ $ where $\n$ denotes the outer unit normal vector  to $\partial\om$
(see Fig.\ref{FigEx6.4}).
%
\vskip0.5cm
\begin{center}
\begin{tikzpicture}[scale=1.1]
 \begin{scope}
 %
 \draw [line width=0.08cm,red] [-] (-4.04,-1) -- (-0.96,-1);
 \draw [line width=0.08cm,red] [-] (-4.04,+2) -- (-0.96,+2);
 \draw [line width=0.08cm,red] [-] (-4.,-1) -- (-4.,+2);
 \draw [line width=0.08cm,red] [-] (-1.,-1) -- (-1.,+2);
 \end{scope}
\begin{scope}[very thick]
 \draw[->] (-3.5,2.5) -- (-3.5,2.1);
 \draw[->] (-3,2.5) -- (-3,2.1);
 \draw[->] (-2.5,2.5) -- (-2.5,2.1);
 \draw[->] (-2,2.5) -- (-2,2.1);
 \draw[->] (-1.5,2.5) -- (-1.5,2.1);
 \draw[->] (-3.5,-1.5) -- (-3.5,-1.1);
 \draw[->] (-3,-1.5) -- (-3,-1.1);
 \draw[->] (-2.5,-1.5) -- (-2.5,-1.1);
 \draw[->] (-2,-1.5) -- (-2,-1.1);
 \draw[->] (-1.5,-1.5) -- (-1.5,-1.1);
 \draw[->] (-4.5,-0.5) -- (-4.1,-0.5);
 \draw[->] (-4.5,0) -- (-4.1,0);
 \draw[->] (-4.5,0.5) -- (-4.1,0.5);
 \draw[->] (-4.5,1) -- (-4.1,1);
 \draw[->] (-4.5,1.5) -- (-4.1,1.5);
 \draw[->] (-0.5,-0.5) -- (-0.9,-0.5);
 \draw[->] (-0.5,0) -- (-0.9,0);
 \draw[->] (-0.5,0.5) -- (-0.9,0.5);
 \draw[->] (-0.5,1) -- (-0.9,1);
 \draw[->] (-0.5,1.5) -- (-0.9,1.5);
\end{scope}
\end{tikzpicture}
\end{center}
\vskip-0.4cm
\begin{figure}[h]
   \caption{Example \ref{nocrit}.}
   \label{FigEx6.4}
\end{figure}
Then \eqref{controsegnoeq} holds true hence, by Remark \ref{controsegno},
$\inf_{\v\in H^1(\Omega;\R^N)} \mathcal F(\v)\ =\ -\infty$. \vskip0.1cm Indeed, for every $ \mathbf W\in
\mathcal M^{N\times N}_{skew}$ such that $|\mathbf W|^2=2$ we obtain}
\\
$$
\int_{\partial \Omega} \!\mathbf f \cdot \mathbf W^2 \xx\, d\mathcal H ^{N-1}\, = \,
- \! \int_{\partial \Omega} \!\mathbf n \cdot \mathbf W^2 \xx \, d\mathcal H ^{N-1}\,= \,
- \!\int_{ \Omega} \dv  (\mathbf W^2 \xx) \,d\xx\,=\, -\, |\Omega|\,\tr\,  \mathbf W^2\,=\, 2\,|\Omega|>0\,.
$$
\vskip0.3cm
\rm{Therefore any Lipschitz open set turns out to be always unstable when uniformly compressed in the direction of the inward normal vector along its boundary and the linearized model proves inadequate
for this case even for small load.}
\end{example}

\end{document}